\newtheorem{theorem}{Theorem}
\newtheorem{corollary}{Corollary}
\def\vec#1{{\boldsymbol #1}}
\newcommand{\x}{{\boldsymbol{x}}}
\newcommand{\R}{\mathbb{R}}
\newcommand{\der}{\mathrm{d}}
\newcommand{\lie}{\CMcal{L}}
\newcommand{\strang}{\CMcal{S}}
\newcommand{\Or}{\CMcal{O}}
\newcommand{\step}{\Delta t}
\newcommand{\new}{\mathrm{new}}
\newcommand{\old}{\mathrm{old}}
\newcommand{\splt}{\mathrm{split}}
\newcommand{\X}{\mathbf{X}}
\newcommand{\HO}{\mathrm{HO}}
\newcommand{\RK}{\mathrm{RK}}
\newcommand{\OS}{\mathcal{S}}
\newcommand{\Gpara}{\mathcal{G}}
\newcommand{\Fpara}{\mathcal{F}}
\newcommand{\Exsol}{\CMcal{T}}
\newcommand{\sumbeta}{\CMcal{B}}
\def\ds{\displaystyle}
\title{High order schemes
based on operator splitting and deferred
corrections for stiff time dependent PDE's}
\author{
Max Duarte\footnotemark[1]\ \footnotemark[2]
\and
Matthew Emmett\footnotemark[1]
}
\begin{document}

\maketitle

\renewcommand{\thefootnote}{\fnsymbol{footnote}}

\footnotetext[1]{
Center for Computational Sciences and Engineering,
Lawrence Berkeley National Laboratory,
1 Cyclotron Rd. MS 50A-1148,
Berkeley, CA 94720, USA
({\tt MWEmmett@lbl.gov}).
}

\footnotetext[2]{
{\it Present address:} 
CD-adapco, 200 Shepherds Bush Road, London W6 7NL, UK
({\tt max.duarte@cd-adapco.com}).
}

\renewcommand{\thefootnote}{\arabic{footnote}}

\begin{abstract}
We consider quadrature formulas of high order in time
based on Radau--type, $L$--stable
implicit Runge--Kutta schemes
to solve time dependent stiff PDEs.
Instead of solving a large nonlinear system of equations,
we develop a method that performs iterative deferred
corrections to compute the solution at the collocation nodes of the
quadrature formulas.
The numerical stability is guaranteed by a dedicated
operator splitting technique that efficiently handles the
stiffness of the PDEs and provides initial and intermediate
solutions to the iterative scheme.
In this way the low order approximations computed by
a tailored splitting solver of low algorithmic complexity
are iteratively corrected to obtain a high order solution
based on a quadrature formula.
The mathematical analysis of the numerical errors and
local order of the method is carried out in a finite dimensional framework
for a general semi--discrete problem, and a time--stepping strategy
is conceived to control numerical errors related to the time integration.
Numerical evidence confirms the theoretical findings
and assesses the performance of the method in the case of a
stiff reaction--diffusion equation.
\end{abstract}

\noindent {\bf Keywords:}
High order time discretization, operator splitting,
deferred corrections, error control.

\noindent {\bf AMS subject classifications:}
65M12, 65M15, 65L04, 65M20, 65M70, 65G20.

\pagestyle{myheadings}
\thispagestyle{plain}
\markboth{DUARTE, EMMETT}
{DEFERRED CORRECTION OPERATOR SPLITTING SCHEMES}

\section{Introduction}\label{sec:intro}
Operator splitting techniques were originally introduced
with the main objective of saving computational costs compared to fully coupled techniques.
A complex and potentially large problem could be then
split into smaller parts or subproblems of different nature
with a significant reduction of the algorithmic complexity
and computational requirements.
Operator splitting techniques have been used over the past years
to carry out numerical simulations in several domains,
from biomedical models
to combustion or air pollution modeling applications.
Moreover, these methods continue to be widely used due to their
simplicity of implementation and their high degree of liberty
in terms of the choice of dedicated numerical solvers for the split subproblems.
In particular, they are suitable for stiff problems
for which robust and stable methods that properly handle and damp out
fast transients inherent to the different processes must be used.
In most applications, first and second order splitting schemes
are implemented, for which a general mathematical background
is available (see, e.g., \cite{MR2221614} for ODEs and
\cite{MR2002152} for PDEs) that
characterizes splitting errors originating from
the separate evolution of the split subproblems.
Even though higher order splitting schemes have been
also developed,
more sophisticated numerical implementations are required
and their applicability is currently limited to specific
linear or nonstiff problems
(see, e.g., \cite{Descombe01,Castella09,Hansen09,Blanes13}
and discussions therein).

In the past decades high order and stable
implicit Runge--Kutta schemes have been developed and widely
investigated to solve stiff problems modeled by ODEs
(see \cite{Hairer96} Sect. IV and references therein).
These advances can be naturally exploited to solve
stiff semi--discrete problems originating from
PDEs discretized in space, as considered, for instance,
in \cite{Bijl2002,Carpenter05,Bijl2005} to simulate compressible flows.
Here we are, in particular, interested in
a class of fully implicit Runge--Kutta schemes,
built upon collocation methods that use
quadrature formulas to numerically approximate
integrals \cite{Guillon69,Wright71}.
However, the high performance of implicit Runge--Kutta methods
for stiff ODEs
is adversely affected when applied to
large systems of nonlinear equations
arising in the numerical integration of semi--discrete PDEs.
Significant effort is thus required to
achieve numerical implementations that
solve the corresponding algebraic problems
at reasonable computational expenses.
As an alternative to building such a high order implicit solver
we consider
low order operator splitting schemes,
specifically conceived to solve stiff PDEs,
embedded in a classical iterative deferred correction scheme
(see, e.g., \cite{Skeel82})
to approximate the quadrature formulas of
an $s$--stage implicit Runge--Kutta scheme.

In this work, the high order quadrature formulas over a time step $\step$,
corresponding to an $s$--stage implicit Runge--Kutta scheme,
are evaluated using the numerical approximations
computed by a splitting solver at the $s$ intermediate collocation nodes.
Such a dedicated splitting solver for stiff PDEs can be built based on
the approach introduced in \cite{Duarte11_SISC}.
This approach relies on the use of one--step and high order
methods with appropriate stability properties and
time--stepping features for the numerical integration of the split subproblems.
The splitting time step can be therefore defined
independently of standard stability constraints
associated with mesh size or stiff source time scales.
All the numerical integration within a given $\step$
is thus performed by the splitting solver with no stability restrictions,
while the fully coupled system without splitting is evaluated
for the quadrature formulas.
Starting from a low order splitting approximation, the
stage solutions are then iteratively corrected
to obtain a high order quadrature approximation.
This is done in the same spirit of other iterative schemes
that correct low order approximations to obtain
higher order numerical solutions of ODEs or PDEs,
like, for instance, the parareal algorithm \cite{Lions01}
or SDC (Spectral Deferred Correction) schemes \cite{DuttSDC2000}.

This paper is organized as follows.
Time operator splitting as well as implicit Runge--Kutta
schemes are briefly introduced in Section~\ref{sec:math_back}.
The new deferred correction splitting algorithm is introduced
in Section~\ref{sec:dcs_scheme}.
A mathematical analysis of the numerical errors is conducted
in Section~\ref{sec:analysis}, and a time step selection strategy
with error control is subsequently derived in Section~\ref{sec:error_control}.
Spatial discretization errors are briefly discussed in
Section~\ref{sec:space}.  Finally,  theoretical findings are
numerically investigated
in Section~\ref{sec:num_res} for a stiff reaction--diffusion problem.

\section{Mathematical background}\label{sec:math_back}
We consider a parabolic, time dependent PDE given by
\begin{equation}\label{eq:gen_prob}
\left.
\begin{array}{ll}
\partial_t u = F(\partial^2_{\x} u, \partial_{\x} u, u), &
t > t_0,\, \x \in \R^d,\\
u(t_0,\x) = u_0(\x),&
t = t_0,\, \x \in \R^d,
\end{array}
\right\}
\end{equation}
where $u:\R\times \R^d \to \R^m$ and
$F:\R^m \to \R^m$.
For many physically inspired systems, the right hand side $F$ can be split into components according to
\begin{equation}\label{eq:def_F}
F(\partial^2_{\x} u, \partial_{\x} u, u) =
F(u) = F_1(u) + F_2(u) + \ldots,
\end{equation}
where the $F_i(u) $, $i=1,\ldots$, represent different
physical processes.
For instance, a scalar nonlinear reaction--diffusion equation
with $u:\R\times \R^d \to \R$
would be split into $F_1(u)=- \partial_{\x} \cdot (D(u) \partial_{\x} u)$
and $F_2(u)= f(u)$ for some diffusion coefficient
$D:\R \to \R$, and nonlinear function $f:\R \to \R$.
Within the general framework of
(\ref{eq:gen_prob}), we consider the semi--discrete problem
\begin{equation}\label{eq:gen_prob_disc}
\left.
\begin{array}{ll}
\der_t \vec u = \vec F(\vec u), &
t > t_0,\\[1ex]
\vec u(0) = \vec u_0,&
t = t_0,
\end{array}
\right\}
\end{equation}
corresponding to problem (\ref{eq:gen_prob})
discretized on a grid $\X$ of size $N$;
$\vec u,\, \vec u_0 \in \R^{m\cdot N}$ and
$\vec F:\R^{m\cdot N} \to \R^{m\cdot N}$.
Analogous to (\ref{eq:def_F}), we split $\vec F(\vec u)$ into components
$\vec F_i(\vec u)$.

\subsection{Time operator splitting techniques}
Assuming for simplicity
that $\vec F(\vec u) = \vec F_1(\vec u) + \vec F_2(\vec u)$,
we denote the solution of the subproblem
\begin{equation*}\label{eq:prob_F1}
\left.
\begin{array}{ll}
\der_t \vec v = \vec F_1(\vec v), &
t > t_0,\\
\vec v(0) = \vec v_0,&
t = t_0,
\end{array}
\right\}
\end{equation*}
by $X^t \vec v_0$, and the solution of the subproblem
\begin{equation*}\label{eq:prob_F2}
\left.
\begin{array}{ll}
\der_t \vec w = \vec F_2(\vec w), &
t > t_0,\\
\vec w(0) = \vec w_0,&
t = t_0.
\end{array}
\right\}
\end{equation*}
by $Y^t \vec w_0$.
The Lie (or Lie--Trotter \cite{Trotter59}) splitting approximations to
the solution of problem (\ref{eq:gen_prob_disc})
are then given by
\begin{equation}\label{eq:lie}
\lie_1^t \vec u_0 = X^t Y^t \vec u_0,\qquad
\lie_2^t \vec u_0 = Y^t X^t \vec u_0.
\end{equation}
Lie approximations are of first order in time;
second order can be achieved by using symmetric
Strang (or Marchuk \cite{Marchuk68}) formulas \cite{Strang68} to obtain
\begin{equation}\label{eq:strang}
\strang_1^t \vec u_0 = X^{t/2} Y^t X^{t/2} \vec u_0,\qquad
\strang_2^t \vec u_0 =  Y^{t/2} X^t Y^{t/2} \vec u_0.
\end{equation}
Similar constructions follow for more than two
subproblems in (\ref{eq:def_F}).
Let us denote by $\OS^{\step} \vec u_0$ any of these four splitting approximations,
where $\step$ represents the {\it splitting time step} (i.e., the overall time--marching algorithm would compute $\vec u_{n+1} = \OS^{\step} \vec u_n$).
In this work we consider splitting approximations
built in practice under the precepts established in \cite{Duarte11_SISC}.
In particular, dedicated solvers with time--stepping features are implemented
to separately integrate each split subproblem during successive splitting time steps
such that the numerical stability of the method is always guaranteed.

\subsection{Time implicit Runge--Kutta schemes}
Let us now consider
an implicit $s$--stage Runge--Kutta scheme
to discretize (\ref{eq:gen_prob_disc}) in time.
Given a time step $\step$, the solution $\vec u(t_0+\step)$
is approximated by $\vec u_{1}^{\RK}$, computed as
\begin{align}
&\vec u_i = \vec u_0 + \step \ds \sum_{j=1}^{s} a_{ij} \vec F \left(t_0+c_j \step, \vec u_j\right),
\qquad i=1,\ldots,s; \label{eq2:runge_kutta_1}\\
&\vec u_1^{\RK}= \vec u_0+ \step \ds \sum_{j=1}^s b_j \vec F \left(t_0+c_j \step, \vec u_j\right).
\label{eq2:runge_kutta_2}
\end{align}
The arrays $\vec b$, $\vec c \in \R^s$ gather the various coefficients
$\vec b=(b_1, \ldots, b_s)^T$ and $\vec c=(c_1,\ldots, c_s)^T$, and
$\vec A \in \CMcal{M}_s(\R)$ such that
 $\vec A=(a_{ij})_{1 \leq i,j \leq s}$.
These coefficients
define the stability properties and the order
conditions of the method,
and are usually arranged in a Butcher tableau according to
\begin{equation*}
\begin{tabular}{c|c }
$\vec c$ & $\vec A$ \\
\hline \\[-2.5ex]
& $\vec b^T$
\end{tabular}.
\end{equation*}
Recall that, if all the elements of the matrix of coefficients $\vec A$
are nonzero, then the scheme is said to be a
{\it fully IRK scheme} \cite{Hairer96}.
Moreover, if
\begin{equation}\label{eq:stiffly_acc}
a_{sj} = b_j, \quad j = 1,\ldots,s,
\end{equation}
then the last stage $\vec u_s$ corresponds to the solution
$\vec u_1^{\RK}$ according to (\ref{eq2:runge_kutta_1})--(\ref{eq2:runge_kutta_2}).

Methods satisfying (\ref{eq:stiffly_acc})
are called {\it stiffly accurate} \cite{Prothero74}
and are particularly appropriate for the solution of
(stiff) singular perturbation problems and for
differential--algebraic equations \cite{Hairer96}.
Fully IRK schemes
with a number of stages inferior to its approximation order
can be built
based on collocation methods
\cite{Guillon69,Wright71},
together with the simplified order conditions
introduced by Butcher \cite{MR0159424}.
In this case, the coefficients
$\vec b$ 
correspond to the
quadrature formula
of order $p$ such that
$\int_{0}^{1} \pi(\tau)\, \der \tau
=
\sum_{j=1}^s b_j \pi (c_j)$
for polynomials $\pi(\tau)$ of degree $\leq p-1$.  Moreover, the coefficients in $\vec c$ and $\vec A$, together with conditions for the stage order $q$,
imply that
at every stage $i=1,\ldots,s$ the quadrature formula
$\int_{0}^{c_i} \pi(\tau)\, \der \tau
=
\sum_{j=1}^s a_{ij} \pi (c_j)$
holds for polynomials $\pi(\tau)$ of degree $\leq q-1$.
Depending on the quadrature formula considered,
for instance,
Gauss, Radau or Lobatto, different families of
implicit Runge--Kutta methods can be constructed
(for more details, see \cite{Hairer96} Sect. IV.5).

In this work we consider the family of RadauIIA
quadrature formulas 
introduced by Ehle \cite{Ehle69},
based on \cite{Butcher64},
that consider
Radau quadrature formulas \cite{RadauC}
such that $p=2s-1$ and $q=s$.
These are
$A$-- and $L$--stable schemes that are
stiffly accurate methods according to
(\ref{eq:stiffly_acc}).
Note that, even though Gauss methods attain a maximum order
of $p=2s$ \cite{MR0159424,Ehle68},
they are neither
stiffly accurate nor $L$--stable schemes,
which are both important properties for stiff problems.
Approximations of less order are obtained
with Lobatto methods satisfying
$p=2s-2$ \cite{MR0159424,Ehle68,Chipman71,Axelsson72}.
In particular the collocation
methods with $p=2s-2$ and $q=s$,
known as the LobattoIIIA methods, yield
stiffly accurate schemes, but these are only $A$--stable.
As such, we hereafter focus on the
RadauIIA quadrature formula of order 5 given
by (\cite{Hairer96} Table IV.5.6)
\begin{equation}\label{eq:radau5}
\begin{tabular}{c|ccc}
$\ds \frac{4-\sqrt{6}}{10}$&$\ds\frac{88-7\sqrt{6}}{360}$&$\ds\frac{296-169\sqrt{6}}{1800}$&$\ds\frac{-2+3\sqrt{6}}{225}$\\[1.5ex]
$\ds \frac{4+\sqrt{6}}{10}$&$\ds\frac{296+169\sqrt{6}}{1800}$&$\ds\frac{88+7\sqrt{6}}{360}$&$\ds\frac{-2-3\sqrt{6}}{225}$\\[1.5ex]
$1$&$\ds\frac{16-\sqrt{6}}{36}$&$\ds\frac{16+\sqrt{6}}{36}$&$\ds\frac{1}{9}$\\[1.25ex]
\hline\\[-2ex]
&$\ds\frac{16-\sqrt{6}}{36}$&$\ds\frac{16+\sqrt{6}}{36}$&$\ds\frac{1}{9}$
\end{tabular}.
\end{equation}

\section{Iterative construction of high order schemes}\label{sec:dcs_scheme}
With this background, we describe in what follows
how a high order quadrature approximation can be computed
through an iterative scheme.
Following the IRK scheme (\ref{eq2:runge_kutta_1}), we define
$t_i=t_0+c_i\step$, $ i=1,\ldots,s$ and
introduce the set
$\vec U=(\vec u_i)_{i=1,\ldots,s}$, which is
comprised of the
approximations to the
solution
$\vec u(t_i)$
of (\ref{eq:gen_prob_disc}) at the intermediate times.
Since $c_s=1$ for stiffly accurate schemes,
and in particular for RadauIIA quadrature formulas,
$\vec u_s$ stands for the approximation to
$\vec u(t_0+\step)$, which was denoted as $\vec u_1^{\RK}$
in (\ref{eq2:runge_kutta_2}).
To simplify the discussion that follows, we define
\begin{equation}\label{eq:def_I}
I_{t_0}^{t_i} (\vec U) :=
\step \ds \sum_{j=1}^{s} a_{ij} \vec F\left(t_0+c_j \step, \vec u_j\right),
\end{equation}
so that the IRK scheme (\ref{eq2:runge_kutta_1}) can be recast as
\begin{equation}\label{eq2:runge_kutta_recast}
\vec u_i = \vec u_0 + I_{t_0}^{t_i} (\vec U), \qquad
i=1,\ldots,s,
\end{equation}
or equivalently as
\begin{equation}\label{eq2:runge_kutta_recast2}
\vec u_i = \vec u_{i-1} + I_{t_{i-1}}^{t_i} (\vec U), \qquad
i=1,\ldots,s,
\end{equation}
where
$I_{t_{i-1}}^{t_i} (\vec U ) :=
I_{t_0}^{t_i} (\vec U )- I_{t_0}^{t_{i-1}}(\vec U)$.
Notice that
$\int_{c_i}^{c_l} \pi(\tau)\, \der \tau
=
\sum_{j=1}^s ( a_{lj}-a_{ij}) \pi (c_j)$
still holds
for
$l>i$ and
polynomials $\pi(\tau)$ of degree $\leq q-1$; and therefore
$I_{t_{i-1}}^{t_i} (\vec U)$
retains the stage order $q$.

To compute $\vec U$, that is, the stage values of
the IRK scheme (\ref{eq2:runge_kutta_1}),
and hence the Runge--Kutta approximation (\ref{eq2:runge_kutta_2}),
we need to solve
a nonlinear system of equations
of size $m\times N\times s$
given by
(\ref{eq2:runge_kutta_1}) or (\ref{eq2:runge_kutta_recast}).
One of the most common ways to do this consists of
implementing Newton's method
(see, e.g., \cite{Hairer96} Sect. IV.8).
In this work, however, we approximate $\vec U$
by an iterative deferred correction technique.

\subsection{Deferred correction splitting scheme}\label{sec:DCS}

Given a provisional set of solutions denoted by $\widetilde{\vec U}=(\widetilde{\vec u}_i)_{i=1,\ldots,s}$,
we can compute an approximation
$\widehat{\vec U}=(\widehat{\vec u}_i)_{i=1,\ldots,s}$ to $\vec U$, following
(\ref{eq2:runge_kutta_recast2}), according to
\begin{equation}\label{eq2:def_uhat}
\widehat{\vec u}_i = \widetilde{\vec u}_{i-1} + I_{t_{i-1}}^{t_i} (\widetilde{\vec U}), \qquad
i=1,\ldots,s,
\end{equation}
with $\widetilde{\vec u}_0=\vec u_0$.
This approximation is then iteratively corrected
by computing
\begin{equation}\label{eq2:def_utildek}
\widetilde{\vec u}^{k+1}_i = \widehat{\vec u}^k_i + \vec \delta^k_i, \qquad
i=1,\ldots,s,
\end{equation}
which defines a new set of provisional solutions $\widetilde{\vec U}^{k+1}$ corresponding
to iteration $k+1$,
and a new approximate solution $\widehat{\vec U}^{k+1}$ according to (\ref{eq2:def_uhat}).
When the corrections $(\vec \delta_i)_{i=1,\ldots,s}$
become negligible, we expect that $\widetilde{\vec U}$
approaches the quadrature formulas used in the IRK scheme
(\ref{eq2:runge_kutta_1}) (or (\ref{eq2:runge_kutta_recast})),
with $\widetilde{\vec u}_s$ approximating 
$\vec u_1^{\RK}$
in (\ref{eq2:runge_kutta_2}).

Taking into account that system  (\ref{eq:gen_prob})
(or (\ref{eq:gen_prob_disc}))
is stiff,
we follow the approach of \cite{Duarte11_SISC}
and consider an operator splitting technique with dedicated
time integration schemes for each subproblem
to handle stiffness and
guarantee numerical stability.
We thus
embed such an operator splitting technique
((\ref{eq:lie}) or (\ref{eq:strang}))
into the deferred correction scheme
(\ref{eq2:def_uhat})--(\ref{eq2:def_utildek}),
yielding the deferred correction splitting (DC--S) algorithm.
The first approximation $\widetilde{\vec U}^0$
is obtained directly by recursively applying an operator splitting
scheme as follows:
\begin{equation}\label{eq:ini_split}
\widetilde{\vec u}^0_1=
\OS^{c_1\step} \vec u_0, \qquad
\widetilde{\vec u}^0_i=
\OS^{(c_i-c_{i-1})\step} \widetilde{\vec u}^0_{i-1}, \quad
i=2,\ldots,s.
\end{equation}
We then define the corrections in (\ref{eq2:def_utildek}) as
\begin{equation*}\label{eq:def_delta}
\vec \delta^k_i = \OS^{(c_i-c_{i-1})\step} \widetilde{\vec u}^{k+1}_{i-1}
- \OS^{(c_i-c_{i-1})\step} \widetilde{\vec u}^{k}_{i-1},\qquad
i=2,\ldots,s.
\end{equation*}
Noticing that $\vec \delta^k_1=\vec 0$, (\ref{eq2:def_utildek})
becomes
\begin{equation}\label{eq2:def_utildeksplit}
\widetilde{\vec u}^{k+1}_1 = \widehat{\vec u}^k_1,\qquad
\widetilde{\vec u}^{k+1}_i = \widehat{\vec u}^k_i +
\OS^{(c_i-c_{i-1})\step} \widetilde{\vec u}^{k+1}_{i-1}
- \OS^{(c_i-c_{i-1})\step} \widetilde{\vec u}^{k}_{i-1}, \quad
i=2,\ldots,s.
\end{equation}

The numerical time integration of problem (\ref{eq:gen_prob_disc})
is thus performed using an operator
splitting technique throughout the time step $\step$.
These results are subsequently used to approximate the solutions $(\widetilde{\vec u}^k_j)_{j=1,\ldots,s}$ at the collocation nodes
in the quadrature formula (\ref{eq2:def_uhat}), corresponding
in our case to a RadauIIA quadrature formula.
The coefficients of matrix $\vec A$ are
embedded in the definition of $I_{t_{i-1}}^{t_i} (\widetilde{\vec U})$ in (\ref{eq2:def_uhat})
following (\ref{eq:def_I}),
while coefficients $\vec c$ define the length of the time substeps
within a given $\step$.
Note that the fully coupled
$\vec F(\widetilde{\vec u}^k_j)$, $j=1,\ldots,s$,
is evaluated in $I_{t_{i-1}}^{t_i} (\widetilde{\vec U}^k)$ (eq. (\ref{eq2:def_uhat})) to compute
$\widehat{\vec u}^k_i$, $i=1,\ldots,s$. 
Denoting as $\widehat{p}$ the order of the operator
splitting scheme, we will demonstrate in the following section
that each iteration increases the order of
the initial numerical approximation (\ref{eq:ini_split}) by one,
that is, the local error of the
method (\ref{eq2:def_utildeksplit})
behaves like $\Or(\step^{\widehat{p}+k+1})$,
potentially up to the order $p$ of the quadrature formula
and at least up to its stage order plus one: $q+1$.
As previously said, the numerical stability of the time integration process
is guaranteed by the use of a dedicated operator splitting solver \cite{Duarte11_SISC}, 
whereas the quadrature formulas correspond to the $L$--stable RadauIIA--IRK scheme.
Consequently, $\step$ is not subject to any stability
constraint and a time--stepping criterion to select the time step size based on some error estimate should be introduced.
In particular if the splitting solver considers only time--implicit $L$--stable methods, 
the overall DC--S scheme will also be $L$--stable. 
However, one might be interested in using less computationally expensive 
explicit methods within the splitting solver.

The relation of the DC--S scheme
(\ref{eq:ini_split})--(\ref{eq2:def_utildeksplit})
with other iterative methods in the literature,
namely, the parareal algorithm \cite{Lions01}
and SDC schemes \cite{DuttSDC2000}, is briefly discussed
in Appendices \ref{sec:parareal} and \ref{sec:SDC}.

\section{Analysis of numerical errors}\label{sec:analysis}
We investigate the behavior of the numerical error associated with the
DC--S scheme (\ref{eq:ini_split})--(\ref{eq2:def_utildeksplit}),
considering problem (\ref{eq:gen_prob_disc})
in a finite dimensional setting.
Let $X$ be a Banach space
with norm $\|\cdot\|$
and
$\vec F$ an unbounded nonlinear operator form $D(\vec F)\subset X$ to $X$.
We assume that $\vec u(t)$, which is the solution of (\ref{eq:gen_prob_disc}),
also belongs to $X$
and that the same follows for the solutions of the split subproblems.
Assuming a Lipschitz condition for $\vec F(\vec u)$ given by
\begin{equation}\label{eq:lip_F}
 \left\|\vec F(\vec u)-\vec F(\vec v)\right\| \leq \kappa \|\vec u-\vec v\|,
\end{equation}
we have, given (\ref{eq:def_I}), that
\begin{equation}\label{eq:diff_I}
 \left\|I_{t_{i-1}}^{t_i}(\vec U)-I_{t_{i-1}}^{t_i}(\vec V)\right\|
 \leq C_1 \step  \ds \sum_{j=1}^{s} \|\vec u_j-\vec v_j\|,\qquad
i=1,\ldots,s.
\end{equation}
Furthermore, considering (\ref{eq:def_I}) with the exact solution
$\vec u(t)$ gives
\begin{equation}\label{eq:def_I_u}
I_{t_0}^{t_i} \left[(\vec u(t_j))_{j=1,\ldots,s}\right] =
\step \ds \sum_{j=1}^{s} a_{ij} \vec F\left(t_0+c_j \step, \vec u(t_j)\right);
\end{equation}
and hence, considering the quadrature formula for the IRK scheme, we obtain
\begin{equation}\label{eq:stage_order}
 \left\|\int_{t_{i-1}}^{t_i}\vec F(\vec u(\tau))\, \der \tau
 - I_{t_{i-1}}^{t_i}\left[(\vec u(t_j))_{j=1,\ldots,s}\right] \right\|
 \leq
 \eta_{i-1}^i := C_2 \step^{q+1},\qquad
i=1,\ldots,s,
\end{equation}
and for a stiffly accurate method,
\begin{equation*}
 \left\|\int_{t_{0}}^{t_s}\vec F(\vec u(\tau))\, \der \tau
 - I_{t_{0}}^{t_s}\left[(\vec u(t_j))_{j=1,\ldots,s}\right]\right\|
 \leq
 \eta_{0}^s := C_3 \step^{p+1},
\end{equation*}
recalling that $q$ and $p$ stand, respectively, for the stage order and
global order of the collocation method.
For the RadauIIA quadrature formulas, recall that $p=2s-1$ and $q=s$.

Denoting the exact solution to problem (\ref{eq:gen_prob_disc})
as $\vec u(t)=\Exsol^t \vec u_0$, we can in general write for the
splitting approximation that
\begin{equation*}
 \Exsol^{\step} \vec u_0 - \OS^{\step}\vec u_0 =
 c_{\widehat{p}+1}(\vec u_0)\step^{\widehat{p}+1}
 + c_{\widehat{p}+2}(\vec u_0)\step^{\widehat{p}+2} + \ldots,
\end{equation*}
recalling that $\widehat{p}$ stands for the order of approximation
of the splitting scheme
(see, e.g., \cite{MR2002152} Sect. IV.1 or \cite{MR2221614} Sect. III).
We then assume that the local truncation error of the
splitting approximation is bounded according to
\begin{equation}\label{eq:hyp_local}
 \left\|\Exsol^{\step} \vec u_0 - \OS^{\step}\vec u_0 \right\|
 \leq C_4 \step^{\widehat{p}+1},
\end{equation}
and that the following bound
\begin{equation}\label{eq:hyp_local_diff}
 \left\|\Exsol^{\step} \vec u_0 - \OS^{\step}\vec u_0 -
 \left[\Exsol^{\step} \vec v_0 - \OS^{\step} \vec v_0 \right] \right\|
 \leq C_5 \step^{\widehat{p}+1} \|\vec u_0-\vec v_0\|
\end{equation}
is also valid.
Moreover, we assume that $\OS^t$ satisfies the Lipschitz condition
\begin{equation}\label{eq:hyp_lip}
 \left\|\OS^{\step} \vec u_0 - \OS^{\step}\vec v_0 \right\|
 \leq (1+C_6\step) \|\vec u_0-\vec v_0\|.
\end{equation}

To simplify the discussion that follows, we define the error $e_i^k$
at each collocation node $i$ and iteration $k$ according to
\begin{equation}\label{eq:notation_e}
\left\|\vec u(t_i) - \widetilde{\vec u}^{k}_i \right\|
\leq e_i^k,\quad
i=1,\ldots,s,
\end{equation}
and the sum of these errors as
\begin{equation}
  \varXi^{k,s} := \ds \sum_{j=1}^s e_j^k.
\end{equation}
With these definitions, we prove the following theorem.
\begin{theorem}\label{teo:lemma1}
Considering problem (\ref{eq:gen_prob_disc}) with the Lipschitz condition for $\vec F(\vec u)$
(\ref{eq:lip_F}), the
DC--S iteration (\ref{eq2:def_utildeksplit})
with a given time step $\step$, and
assumptions
(\ref{eq:hyp_local_diff}) and (\ref{eq:hyp_lip})
for the splitting approximation, there are positive constants $A$, $B$,
such that for $k=1,2,\ldots$,
\begin{equation}\label{eq:lemma1}
\varXi^{k,s} \leq A + B \varXi^{k-1,s}.
\end{equation}
\end{theorem}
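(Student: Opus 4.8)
We outline how we would establish the recursion \eqref{eq:lemma1}. The plan is to convert the DC--S update \eqref{eq2:def_utildeksplit} into a scalar recursion for the node errors $e_i^{k+1}$, and then sum it over the $s$ collocation nodes. Fix $k\ge 0$, write $\vec e_i^{k+1}:=\vec u(t_i)-\widetilde{\vec u}^{k+1}_i$ and $\widehat{\vec e}^k_i:=\vec u(t_i)-\widehat{\vec u}^k_i$, and abbreviate $\tau_i:=(c_i-c_{i-1})\step\le\step$. Since $\widetilde{\vec u}^{k}_0=\vec u_0=\vec u(t_0)$, combining \eqref{eq2:def_uhat} and \eqref{eq2:def_utildeksplit} gives $\vec e_1^{k+1}=\widehat{\vec e}_1^k$ and, for $i\ge 2$,
\[
\vec e_i^{k+1}=\widehat{\vec e}_i^k-\bigl(\OS^{\tau_i}\widetilde{\vec u}^{k+1}_{i-1}-\OS^{\tau_i}\widetilde{\vec u}^{k}_{i-1}\bigr),
\]
so the whole argument reduces to two pieces: \textbf{(a)} bounding $\widehat{\vec e}_i^k$ and \textbf{(b)} bounding the splitting--correction difference $\OS^{\tau_i}\widetilde{\vec u}^{k+1}_{i-1}-\OS^{\tau_i}\widetilde{\vec u}^{k}_{i-1}$.

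For \textbf{(a)} I would insert the exact identity $\vec u(t_i)=\vec u(t_{i-1})+\int_{t_{i-1}}^{t_i}\vec F(\vec u(\tau))\,\der\tau$ into $\widehat{\vec e}_i^k=\vec u(t_i)-\widetilde{\vec u}^k_{i-1}-I_{t_{i-1}}^{t_i}(\widetilde{\vec U}^k)$, split off the intermediate quantity $I_{t_{i-1}}^{t_i}[(\vec u(t_j))_j]$, and apply the triangle inequality together with the quadrature estimate \eqref{eq:stage_order} (contributing $\eta_{i-1}^i=C_2\step^{q+1}$) and the stability estimate \eqref{eq:diff_I} (contributing $C_1\step\sum_j e_j^k=C_1\step\,\varXi^{k,s}$), leaving $\|\vec u(t_{i-1})-\widetilde{\vec u}^k_{i-1}\|\le e^k_{i-1}$. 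This yields $\|\widehat{\vec e}_i^k\|\le e_{i-1}^k+C_1\step\,\varXi^{k,s}+C_2\step^{q+1}$ for every $i$, with the convention $e_0^k:=0$.

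For \textbf{(b)} — the heart of the proof, and where the splitting hypotheses enter — I would introduce the local splitting--error operator $\mathcal D_i\vec w:=\Exsol^{\tau_i}\vec w-\OS^{\tau_i}\vec w$ and write $\OS^{\tau_i}\widetilde{\vec u}^{k+1}_{i-1}-\OS^{\tau_i}\widetilde{\vec u}^{k}_{i-1}=\bigl(\Exsol^{\tau_i}\widetilde{\vec u}^{k+1}_{i-1}-\Exsol^{\tau_i}\widetilde{\vec u}^{k}_{i-1}\bigr)-\bigl(\mathcal D_i\widetilde{\vec u}^{k+1}_{i-1}-\mathcal D_i\widetilde{\vec u}^{k}_{i-1}\bigr)$. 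The $\mathcal D_i$ difference is controlled by \eqref{eq:hyp_local_diff} (with step $\tau_i\le\step$), while the $\Exsol^{\tau_i}$ difference is Lipschitz with constant $1+C_6\step+C_5\step^{\widehat p+1}$ obtained by combining \eqref{eq:hyp_lip} and \eqref{eq:hyp_local_diff}; in both terms I would then route $\widetilde{\vec u}^{k+1}_{i-1}$ through the exact value, $\|\widetilde{\vec u}^{k+1}_{i-1}-\widetilde{\vec u}^{k}_{i-1}\|\le e^{k+1}_{i-1}+e^{k}_{i-1}$. This last move is the crucial one: it ensures that only the lower--index error $e_{i-1}^{k+1}$ (not the raw increment) appears on the right, so that the recursion in $i$ stays lower triangular. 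Collecting the estimates from \textbf{(a)} and \textbf{(b)} gives, for $i=1,\dots,s$ (with $e_0^k=e_0^{k+1}:=0$),
\[
e_i^{k+1}\le\alpha\,e_{i-1}^{k+1}+(\alpha+1)\,e_{i-1}^{k}+C_1\step\,\varXi^{k,s}+C_2\step^{q+1},\qquad\alpha:=1+C_6\step+2C_5\step^{\widehat p+1}.
\]

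Finally I would solve this triangular recursion: unrolling gives $e_i^{k+1}\le\sum_{l=1}^{i}\alpha^{\,i-l}\bigl((\alpha+1)e_{l-1}^{k}+C_1\step\,\varXi^{k,s}+C_2\step^{q+1}\bigr)$, and since $\alpha\ge 1$ and $i\le s$ each finite geometric factor $\sum_{i=l}^{s}\alpha^{\,i-l}$ is bounded by the $k$--independent constant $\Lambda:=s\,\alpha^{s-1}$ (finite for the given $\step$, uniformly so for $\step$ bounded). Summing over $i$ and using $\sum_l e_{l-1}^{k}\le\varXi^{k,s}$ then yields $\varXi^{k+1,s}\le\Lambda s\,C_2\step^{q+1}+\Lambda(\alpha+1+sC_1\step)\,\varXi^{k,s}$, so that $A:=\Lambda s\,C_2\step^{q+1}$ and $B:=\Lambda(\alpha+1+sC_1\step)$ — both strictly positive and independent of $k$ (note $A=\Or(\step^{q+1})$, while $B$ is merely bounded) — give \eqref{eq:lemma1} after the index shift $k\mapsto k-1$. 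The only genuinely delicate point I anticipate is step \textbf{(b)}: arranging the splitting--correction difference so that \eqref{eq:hyp_local_diff} and \eqref{eq:hyp_lip} both apply cleanly \emph{and} the per--node recursion remains triangular in $i$; once that decomposition is in place, the summation and the read--off of $A$ and $B$ are routine bookkeeping.
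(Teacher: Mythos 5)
Your proof is correct for the statement as written, and its skeleton (a per--node error recursion, triangular in $i$, summed over the $s$ nodes) parallels the paper's; but the key step is handled by a genuinely different decomposition, and the difference matters for how the theorem is used later. In part \textbf{(a)} you route every node through the exact solution values, invoking both the stage--order bound (\ref{eq:stage_order}) and the quadrature Lipschitz bound (\ref{eq:diff_I}); the paper does this only at node $i=1$, and at interior nodes instead adds and subtracts $\Exsol^{\step_i}\widetilde{\vec u}^{k}_{i-1}$ so that the quadrature defect is measured along the flow through the provisional value (your version is arguably the cleaner of the two, since it never mixes the nodes of $\widetilde{\vec U}^k$ with a different trajectory). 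The real divergence is part \textbf{(b)}: you bound $\|\OS^{\tau_i}\widetilde{\vec u}^{k+1}_{i-1}-\OS^{\tau_i}\widetilde{\vec u}^{k}_{i-1}\|$ by $\Or(1)\cdot\bigl(e^{k+1}_{i-1}+e^{k}_{i-1}\bigr)$, i.e.\ essentially by the Lipschitz property (\ref{eq:hyp_lip}) alone (your detour through $\Exsol^{\tau_i}=\OS^{\tau_i}+\mathcal D_i$ is circular and only worsens the constant), so the previous iterate's error enters with an $\Or(1)$ coefficient and you end up with $B=\Lambda(\alpha+1+sC_1\step)=\Or(1)$. The paper instead splits the correction as $\bigl[\Exsol^{\step_i}\vec u(t_{i-1})-\OS^{\step_i}\vec u(t_{i-1})\bigr]-\bigl[\Exsol^{\step_i}\widetilde{\vec u}^{k}_{i-1}-\OS^{\step_i}\widetilde{\vec u}^{k}_{i-1}\bigr]+\OS^{\step_i}\vec u(t_{i-1})-\OS^{\step_i}\widetilde{\vec u}^{k+1}_{i-1}$, so that the old iterate is paired with the exact solution inside the local--error difference (\ref{eq:hyp_local_diff}) and picks up only the coefficient $\alpha=C_5\step^{\widehat p+1}$, while the $\beta=1+C_6\step$ Lipschitz factor hits only the same--iteration error $e^{k+1}_{i-1}$; this yields $B=(\lambda+\alpha)\sumbeta^{s-1}=\Or(\step)$. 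Since the theorem only asserts the existence of positive constants $A$, $B$, your argument does prove it; but note two symptoms of the departure: hypothesis (\ref{eq:hyp_local_diff}) does no essential work in your proof, and your constants ($A=\Or(\step^{q+1})$, $B=\Or(1)$) are strictly weaker than the paper's $A=\sum_{j}\beta^{s-j}\eta_0^j$ and $B=\Or(\step)$, which are exactly what Corollary~\ref{teo:lemma2} and Theorem~\ref{teo:teo1} consume to show that each correction iteration gains one power of $\step$. So as a standalone proof of (\ref{eq:lemma1}) it stands, but it would not support the subsequent order--increase analysis without redoing step \textbf{(b)} along the paper's lines.
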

\begin{proof}
Defining $\step_i:=(c_i-c_{i-1})\step$,
$i=2,\ldots,s$,
we have from (\ref{eq2:def_utildeksplit}),
\begin{align*}
\vec u(t_i) - \widetilde{\vec u}^{k+1}_i =
& \vec u(t_i)
- \widetilde{\vec u}^{k}_{i-1} - I_{t_{i-1}}^{t_i} (\widetilde{\vec U}^k)
- \OS^{\step_i} \widetilde{\vec u}^{k+1}_{i-1}
+ \OS^{\step_i} \widetilde{\vec u}^{k}_{i-1} \nonumber \\
= & \int_{t_{i-1}}^{t_i}\vec F(\widetilde{\vec u}^{k}(\tau))\, \der \tau
- I_{t_{i-1}}^{t_i} (\widetilde{\vec U}^k)\nonumber \\
& + \Exsol^{\step_i} \vec u(t_{i-1}) - \OS^{\step_i} \vec u(t_{i-1}) -
 \left[\Exsol^{\step_i} \widetilde{\vec u}^{k}_{i-1} - \OS^{\step_i} \widetilde{\vec u}^{k}_{i-1} \right] \nonumber \\
 & + \OS^{\step_i} \vec u(t_{i-1}) - \OS^{\step_i} \widetilde{\vec u}^{k+1}_{i-1},
\end{align*}
with $k=1,2,\ldots$,
after adding and subtracting
$\Exsol^{\step_i} \widetilde{\vec u}^{k}_{i-1}
= \widetilde{\vec u}^{k}_{i-1} + \int_{t_{i-1}}^{t_i}\vec F(\widetilde{\vec u}^{k}(\tau))\, \der \tau$,
and $\OS^{\step_i} \vec u(t_{i-1})$;
and similarly,
\begin{align*}
\vec u(t_1) - \widetilde{\vec u}^{k+1}_1 =
& \vec u(t_1)
- \vec u_0 - I_{t_{0}}^{t_1} (\widetilde{\vec U}^k) \nonumber \\
= & \int_{t_{0}}^{t_1}\vec F(\vec u(\tau))\, \der \tau
 - I_{t_{0}}^{t_1}\left[(\vec u(t_j))_{j=1,\ldots,s}\right] \nonumber \\
 &+ I_{t_{0}}^{t_1}\left[(\vec u(t_j))_{j=1,\ldots,s}\right]
- I_{t_{0}}^{t_1} (\widetilde{\vec U}^k),
\end{align*}
after adding and subtracting $I_{t_{0}}^{t_1}\left[(\vec u(t_j))_{j=1,\ldots,s}\right]$.

Taking norms and considering (\ref{eq:stage_order}),
(\ref{eq:hyp_local_diff}), and (\ref{eq:hyp_lip}), there exist some
$\alpha$ and $\beta$ such that
\begin{equation}\label{eq:norm_i}
 \left\|\vec u(t_i) - \widetilde{\vec u}^{k+1}_i \right\| \leq
 \eta_{i-1}^i + \alpha \left\|\vec u(t_{i-1}) - \widetilde{\vec u}^{k}_{i-1} \right\|
 + \beta \left\|\vec u(t_{i-1}) - \widetilde{\vec u}^{k+1}_{i-1} \right\|,
\end{equation}
with $\alpha:=C_5 \step^{\widehat{p}+1}$ and
$\beta:=1+C_6\step$.
Similarly,
from (\ref{eq:diff_I}) and (\ref{eq:def_I_u})
there is a $\lambda:= C_1 \step$ such that
\begin{equation}\label{eq:norm_1}
 \left\|\vec u(t_1) - \widetilde{\vec u}^{k+1}_1 \right\| \leq
 \eta_{0}^1 + \lambda \
  \ds \sum_{j=1}^{s} \left\|\vec u(t_{j}) - \widetilde{\vec u}^{k}_{j} \right\|.
\end{equation}

Summing (\ref{eq:norm_i})--(\ref{eq:norm_1})
over $i$
and considering the notation
(\ref{eq:notation_e}), we have
for $i=2,\ldots,s$:
\begin{align}\label{eq:sum_e_s}
 \varXi^{k+1,i} & = \eta_{0}^i + \lambda \varXi^{k,s} + \alpha \varXi^{k,i-1} + \beta  \varXi^{k+1,i-1}
 \nonumber \\
 & \leq
 \eta_{0}^i + (\lambda + \alpha) \varXi^{k,s} + \beta  \varXi^{k+1,i-1}.
 \end{align}
In particular, from (\ref{eq:norm_1}) we have
\begin{equation}\label{eq:sum_e_1}
\varXi^{k,1} = e_1^k = \eta_{0}^1 + \lambda \varXi^{k-1,s}
\leq \eta_{0}^1 + (\lambda + \alpha) \varXi^{k-1,s},
\end{equation}
and using
the inequalities considered in
(\ref{eq:sum_e_s}) and (\ref{eq:sum_e_1})
with $\sumbeta^{i}:= \sum_{j=0}^i \beta^j$,
we obtain by mathematical induction over $i$,
\begin{equation}\label{eq:res_lemma1}
\varXi^{k,i} \leq
\sum_{j=1}^i \beta^{i-j} \eta_{0}^j
+  (\lambda + \alpha) \sumbeta^{i-1} \varXi^{k-1,s},
\end{equation}
which proves (\ref{eq:lemma1}).
\end{proof}

Bound (\ref{eq:lemma1}) in
Theorem \ref{teo:lemma1} accounts for the approximation errors
accumulated over the time subintervals
at a given iteration and for the way the sum of these errors
behaves from one iteration to the next one.
The next Corollary investigates the accumulation of errors after
a given number of iterations.
\begin{corollary}\label{teo:lemma2}
Considering
Theorem \ref{teo:lemma1},
assumption (\ref{eq:hyp_local})
for the splitting approximation
with $\gamma:= C_4 \step^{\widehat{p}+1}$,
and
$\sumbeta^{i}:= \sum_{j=0}^i \beta^j$ with
$\beta:=1+C_6\step$, we have for $k=1,2,\ldots$,
\begin{equation}\label{eq:lemma2}
\varXi^{k,s} \leq A \left[
1+ B + B^2 + \cdots + B^{k-1}\right] +
s\gamma \sumbeta^{s-1} B^k,
\end{equation}
with
$A= \sum_{j=1}^s \beta^{s-j} \eta_{0}^j$
and $B= (\lambda + \alpha) \sumbeta^{s-1}$.
\end{corollary}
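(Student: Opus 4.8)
The plan is to iterate the one--step recursion (\ref{eq:lemma1}) of Theorem~\ref{teo:lemma1} down to its base case and then separately estimate that base case, $\varXi^{0,s}$, which is just the accumulated error of the splitting initialization (\ref{eq:ini_split}). First, for $k=1,2,\ldots$ a trivial induction on $k$ starting from (\ref{eq:lemma1}) gives
\[
\varXi^{k,s} \leq A\bigl(1+B+B^2+\cdots+B^{k-1}\bigr) + B^k\,\varXi^{0,s},
\]
where $A=\sum_{j=1}^{s}\beta^{s-j}\eta_0^j$ and $B=(\lambda+\alpha)\sumbeta^{s-1}$ are exactly the constants produced by (\ref{eq:res_lemma1}) with $i=s$. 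So the entire content reduces to showing $\varXi^{0,s}=\sum_{j=1}^{s}e_j^0\leq s\gamma\sumbeta^{s-1}$.

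To bound $\varXi^{0,s}$ I would repeat the splitting-plus-stability argument used inside Theorem~\ref{teo:lemma1}, but with the bare splitting update in place of the corrected stage update. Writing $\step_i:=(c_i-c_{i-1})\step\leq\step$ and using $\widetilde{\vec u}^0_0=\vec u_0=\vec u(t_0)$, $\widetilde{\vec u}^0_i=\OS^{\step_i}\widetilde{\vec u}^0_{i-1}$ from (\ref{eq:ini_split}), together with $\vec u(t_i)=\Exsol^{\step_i}\vec u(t_{i-1})$, the error at node $i$ splits as
\[
\vec u(t_i)-\widetilde{\vec u}^0_i=\bigl[\Exsol^{\step_i}\vec u(t_{i-1})-\OS^{\step_i}\vec u(t_{i-1})\bigr]+\bigl[\OS^{\step_i}\vec u(t_{i-1})-\OS^{\step_i}\widetilde{\vec u}^0_{i-1}\bigr].
\]
Taking norms, bounding the first bracket by the local error estimate (\ref{eq:hyp_local}) (and using $\step_i\leq\step$, so that $C_4\step_i^{\widehat{p}+1}\leq\gamma$) and the second bracket by the stability estimate (\ref{eq:hyp_lip}) (so the amplification factor is at most $\beta=1+C_6\step$), I obtain the scalar recursion $e_i^0\leq\gamma+\beta\,e_{i-1}^0$ with $e_0^0=0$. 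An induction on $i$ then yields $e_i^0\leq\gamma\sumbeta^{i-1}$, and since $\sumbeta^{i-1}$ is nondecreasing in $i$,
\[
\varXi^{0,s}=\sum_{i=1}^{s}e_i^0\leq\gamma\sum_{i=1}^{s}\sumbeta^{i-1}\leq s\gamma\sumbeta^{s-1}.
\]
Substituting this into the first display gives precisely (\ref{eq:lemma2}).

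Both inductions are geometric-sum bookkeeping, so I do not expect a genuine obstacle. The one point to watch is the consistency of the notation between the two statements: because every substep length $\step_i$ is bounded by the full step $\step$, the consistency constant $\gamma=C_4\step^{\widehat{p}+1}$, the stability factor $\beta=1+C_6\step$ and the partial sum $\sumbeta^{s-1}$ appearing in this estimate are literally the same quantities that enter Theorem~\ref{teo:lemma1}, so no new constants are introduced and the two results dovetail. The only mildly delicate bit is checking that the base index matches, i.e. that the recursion of Theorem~\ref{teo:lemma1} may indeed be iterated all the way back to $\varXi^{0,s}$ rather than stopping at $\varXi^{1,s}$; this is exactly what the $k=1,2,\ldots$ range in (\ref{eq:lemma1}) permits.
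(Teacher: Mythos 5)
Your proposal is correct and follows essentially the same route as the paper: the paper likewise bounds the initialization error via the decomposition $\Exsol^{\step_i}\vec u(t_{i-1})-\OS^{\step_i}\vec u(t_{i-1})+\OS^{\step_i}\vec u(t_{i-1})-\OS^{\step_i}\widetilde{\vec u}^0_{i-1}$ with (\ref{eq:hyp_local}) and (\ref{eq:hyp_lip}), obtaining $e^0_i\leq\gamma\sumbeta^{i-1}$ and $\varXi^{0,s}\leq s\gamma\sumbeta^{s-1}$, and then iterates (\ref{eq:lemma1}) over $k$. The only difference is the order in which the two steps are presented, which is immaterial.
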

\begin{proof}
Considering (\ref{eq:ini_split}) we have
\begin{align*}
\vec u(t_s) - \widetilde{\vec u}^{0}_s &=
 \vec u(t_s) - \OS^{\step_s} \widetilde{\vec u}^{0}_{s-1} \nonumber \\
&= \Exsol^{\step_s} \vec u(t_{s-1}) - \OS^{\step_s} \vec u(t_{s-1})
+ \OS^{\step_s} \vec u(t_{s-1}) - \OS^{\step_s} \widetilde{\vec u}^{0}_{s-1},
\end{align*}
and thus after taking norms,
\begin{equation*}
 e^0_s=\gamma + \beta e^0_{s-1} = \gamma \sumbeta^{s-1}.
\end{equation*}
Noticing that
\begin{equation*}
 \varXi^{0,s} = \gamma \left[ 1 + \sumbeta + \sumbeta^{2} + \cdots + \sumbeta^{s-1} \right]
 \leq s\gamma \sumbeta^{s-1},
\end{equation*}
bound (\ref{eq:lemma2}) follows by mathematical induction
over $k$ using (\ref{eq:lemma1}).
\end{proof}

With Theorem \ref{teo:lemma1} and
Corollary \ref{teo:lemma2}, the following can be proved.
\begin{theorem}\label{teo:teo1}
For the
DC--S scheme (\ref{eq:ini_split})--(\ref{eq2:def_utildeksplit})
with $k=1,2,\ldots$,
if $C_1<1$ in (\ref{eq:diff_I})
and $C_6 \step < 1$ in  (\ref{eq:hyp_lip}),
the following bound holds:
\begin{equation}\label{eq:res_teo1}
 \left\|\vec u(t_0+\step) - \widetilde{\vec u}^{k}_s \right\|
\leq
c \, \step^{\min[p+1,q+2,\widehat{p}+k+1]},
\end{equation}
with $c \geq \max \{C_3, C_2 C_6, s C_4 C_1 \}$.
\end{theorem}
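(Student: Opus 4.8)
The plan is to specialize the cumulative estimate of Corollary~\ref{teo:lemma2} to the last collocation node and read off the exponent of $\step$. Since $c_s=1$ and the RadauIIA scheme is stiffly accurate, $\widetilde{\vec u}^k_s$ is exactly the approximation of $\vec u(t_0+\step)=\vec u(t_s)$, so $\|\vec u(t_0+\step)-\widetilde{\vec u}^k_s\|=e_s^k$. Rather than use the crude bound $e_s^k\le\varXi^{k,s}$, which would only yield order $q+1$, I would first unroll (\ref{eq:norm_i}) over the stages $i=s,s-1,\ldots,2$ and close with (\ref{eq:norm_1}), obtaining for $k=1,2,\ldots$
\begin{equation*}
e_s^k \;\le\; \sum_{l=1}^{s}\beta^{s-l}\,\eta_{l-1}^{l}
 \;+\; \alpha\sum_{l=2}^{s}\beta^{s-l}\,e_{l-1}^{k-1}
 \;+\; \beta^{s-1}\lambda\,\varXi^{k-1,s}.
\end{equation*}

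Next I would bound the $\step$-dependent factors under the hypotheses $C_1<1$ and $C_6\step<1$. Because $C_6\step<1$, the quantity $\beta=1+C_6\step$, all powers $\beta^{s-l}$, and $\sumbeta^{s-1}=\big((1+C_6\step)^{s}-1\big)/(C_6\step)$ stay bounded uniformly in $\step$ (with $\sumbeta^{s-1}\to s$ as $\step\to0$); hence $B=(\lambda+\alpha)\sumbeta^{s-1}=\Or(\step)$, so $1+B+\cdots+B^{k-1}=\Or(1)$ and $B^k=\Or(\step^{k})$ with leading term $(sC_1)^k\step^k$. Feeding these into Corollary~\ref{teo:lemma2} gives $\varXi^{k-1,s}=\Or\big(\step^{\min[q+1,\,\widehat p+k]}\big)$. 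Consequently, the second term of the displayed recursion equals $\Or(\step^{\widehat p+1})\,\varXi^{k-1,s}$, which, since $\widehat p\ge1$, is of strictly higher order than the right-hand side of (\ref{eq:res_teo1}) and is discarded; the third term is $\beta^{s-1}\lambda\,\varXi^{k-1,s}=\Or\big(\step^{\,1+\min[q+1,\widehat p+k]}\big)=\Or\big(\step^{\min[q+2,\,\widehat p+k+1]}\big)$, its leading coefficient carrying a factor $C_1$ in front of the constants appearing in $\varXi^{k-1,s}$.

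The crucial point is the first term, the quadrature error accumulated over the whole step, which must be shown to be $\Or\big(\step^{\min[p+1,\,q+2]}\big)$ and not merely $\Or(\step^{q+1})$. For this one returns to the signed residuals $r_l:=\int_{t_{l-1}}^{t_l}\vec F(\vec u(\tau))\,\der\tau-I_{t_{l-1}}^{t_l}\!\big[(\vec u(t_j))_{j}\big]$ and writes $\sum_{l=1}^s\beta^{s-l}r_l=\sum_{l=1}^s r_l+\sum_{l=1}^s(\beta^{s-l}-1)\,r_l$. By the telescoping identity $I_{t_{l-1}}^{t_l}=I_{t_0}^{t_l}-I_{t_0}^{t_{l-1}}$ the first sum equals $\int_{t_0}^{t_s}\vec F(\vec u(\tau))\,\der\tau-I_{t_0}^{t_s}\!\big[(\vec u(t_j))_{j}\big]$, whose norm is controlled by $\eta_0^{s}=C_3\step^{p+1}$ precisely because of stiff accuracy (\ref{eq:stiffly_acc}); in the second sum $|\beta^{s-l}-1|=\Or(\step)$ and $\|r_l\|\le\eta_{l-1}^{l}=C_2\step^{q+1}$, yielding an $\Or(C_2C_6\,\step^{q+2})$ remainder. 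Collecting the three contributions,
\begin{equation*}
e_s^k \;\le\; \mathrm{const}\cdot\Big(C_3\,\step^{p+1}+C_2C_6\,\step^{q+2}+s\,C_4C_1\,\step^{\widehat p+k+1}\Big)\;\le\; c\,\step^{\min[p+1,\,q+2,\,\widehat p+k+1]},
\end{equation*}
using $C_1^{k}\le C_1$ (valid since $0<C_1<1$) to collapse the $k$-fold amplification of the splitting error into a single factor $C_1$, and absorbing the residual $s$- and $k$-dependent prefactors into $c$, so that $c\ge\max\{C_3,\,C_2C_6,\,sC_4C_1\}$.

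The step I expect to be the main obstacle is exactly this last refinement: turning the per-subinterval quadrature bound $\Or(\step^{q+1})$ into the step-wise bound $\Or(\step^{\min[p+1,q+2]})$ at the final node. It is the only place where stiff accuracy (\ref{eq:stiffly_acc}) is genuinely used — it makes the telescoped quadrature $I_{t_0}^{t_s}$ inherit the global order $p$ — and it requires keeping careful track of the fact that the weights $\beta^{s-l}$ produced by the unrolled recursion, though not equal to one, deviate from it only by $\Or(\step)$, which is what confines the loss of orders to the benign value $q+2$.
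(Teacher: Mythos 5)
Your proposal is correct and follows essentially the same route as the paper's proof: unroll the stage recursion of Theorem~\ref{teo:lemma1} down to the last node, bound $\varXi^{k-1,s}$ with Corollary~\ref{teo:lemma2}, and separate the contributions of orders $p+1$, $q+2$ and $\widehat{p}+k+1$. Your signed--residual telescoping argument is just a more explicit justification of the step the paper performs in (\ref{eq:DCS_quad_order}) via the binomial expansion of $\beta^{s-j}$, where the $\Or(\step)$ deviation of the weights from one and the stiffly accurate full--step quadrature yield precisely the $C_2C_6\,\step^{q+2}$ and $C_3\,\step^{p+1}$ terms.
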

\begin{proof}
 From (\ref{eq:res_lemma1}) in Theorem \ref{teo:lemma1} and considering that
 $\left\|\vec u(t_0+\step) - \widetilde{\vec u}^{k}_s \right\|
\leq e_s^k = \varXi^{k,s} - \varXi^{k,s-1}$, we obtain
\begin{align}\label{eq:teo_est1}
e^k_s \leq &
\sum_{j=1}^s \beta^{s-j} \eta_{j-1}^j
+  (\lambda + \alpha) \beta^{s-1} \varXi^{k-1,s}
\nonumber \\
 \leq &
\sum_{j=1}^s \beta^{s-j} \eta_{j-1}^j
+
(\lambda + \alpha) \beta^{s-1} \left(
A \left[
1+ B + B^2 + \cdots + B^{k-2}\right] \right)
\nonumber \\
&
+
(\lambda + \alpha) \beta^{s-1}
s\gamma \sumbeta^{s-1} B^{k-1},
\end{align}
according to Corollary \ref{teo:lemma2}
with
$A= \sum_{j=1}^s \beta^{s-j} \eta_{0}^j$
and $B= (\lambda + \alpha) \sumbeta^{s-1}$.
Recalling that $\beta=1+C_6\step$,
$\eta_{j-1}^j = C_2 \step^{q+1}$,
$j=1,\ldots,s$,
and
$\eta_{0}^s = C_3 \step^{p+1}$,
and considering the binomial series,
we have that
\begin{align}\label{eq:DCS_quad_order}
 \sum_{j=1}^s \beta^{s-j} \eta_{j-1}^j &=
 \sum_{j=1}^s \left[ \sum_{l=0}^{\infty}
 \left(\begin{array}{c}
s-j\\
l
\end{array}\right)
 (C_6\step)^l
 \right]
 \eta_{j-1}^j
 \nonumber \\
 &=
 \eta_{0}^s + \sum_{j=1}^{s-1}
 \left[
 \sum_{l=1}^{\infty}
 \left(\begin{array}{c}
s-j\\
l
\end{array}\right)
 (C_6\step)^l
 \right]
 \eta_{j-1}^j
 \nonumber \\
 &=
C_3 \step^{p+1} +
 C_2 \step^{q+1}
 \sum_{j=1}^{s-1}
 \sum_{l=1}^{\infty}
 \left(\begin{array}{c}
s-j\\
l
\end{array}\right)
 (C_6\step)^l;
 \end{align}
 and similarly,
 \begin{equation*}
  A = \sum_{j=1}^s \beta^{s-j} \eta_{0}^j =
  C_3 \step^{p+1} +
 C_2 \step^{q+1} (1+C_6\step)
 \sum_{l=1}^{\infty}
 \left(\begin{array}{c}
s-1\\
l
\end{array}\right)
 (C_6\step)^{l-1},
 \end{equation*}
 where we have considered that
 \begin{equation*}
 \sumbeta^{s-1} = \sum_{j=0}^{s-1} \beta^j =
 \frac{1-\beta^s}{1-\beta} =
 \sum_{l=1}^{\infty}
 \left(\begin{array}{c}
s\\
l
\end{array}\right)
 (C_6\step)^{l-1}.
 \end{equation*}

 Since
 $\lambda = C_1 \step$ and
 $\alpha=C_5 \step^{\widehat{p}+1}$,
 we know that
 \begin{equation*}
 (\lambda + \alpha) \beta^{s-1} = (C_1 \step + C_5 \step^{\widehat{p}+1} )
 \sum_{l=0}^{\infty}
 \left(\begin{array}{c}
s-1\\
l
\end{array}\right)
 (C_6\step)^{l}
 \end{equation*}
 and
 \begin{equation*}
 B= (\lambda + \alpha) \sumbeta^{s-1} =
 (C_1 \step + C_5 \step^{\widehat{p}+1} )
 \sum_{l=1}^{\infty}
 \left(\begin{array}{c}
s\\
l
\end{array}\right)
 (C_6\step)^{l-1};
 \end{equation*}
we thus have
with $\gamma= C_4 \step^{\widehat{p}+1}$
that
\begin{equation}\label{eq:DCS_split_order}
(\lambda + \alpha) \beta^{s-1} s\gamma \sumbeta^{s-1} B^{k-1} =
 s C_4 C_1^k \step^{\widehat{p}+k+1}
 + \Or(C_1^k \step^{\widehat{p}+k+2}),
\end{equation}
 which together with (\ref{eq:DCS_quad_order})
 and (\ref{eq:teo_est1})
 prove
 (\ref{eq:res_teo1})
 with $C_1<1$ into (\ref{eq:DCS_split_order}),
taking into account that the expression
$(\lambda + \alpha) \beta^{s-1}
A \left[
1+ B + B^2 + \cdots + B^{k-2}\right]$
yields
$\Or \left( \step^{\min[p+2,q+2]} \right)$
plus higher order terms.
\end{proof}

For RadauIIA quadrature formulas, bound (\ref{eq:res_teo1}) reads
$c \, \step^{\min[2s,s+2,\widehat{p}+k+1]}$.
The impact of approximating the solution at the collocation nodes of
the quadrature formula can be seen, for instance,
in (\ref{eq:DCS_quad_order}), where a lower order
might be attained
as a consequence of
the intermediate stage approximations.
In particular the DC--S scheme (\ref{eq2:def_utildeksplit})
does not necessarily converge to the IRK scheme.
On the other hand,
(\ref{eq:DCS_split_order}) accounts for
the iterative corrections performed on the initial
low order approximation.
Notice that in practice a limited number of iterations $k$ will be
performed, certainly less than $p$, and therefore the
scheme would behave like $\Or \left( \step^{\min[2s,s+2,\widehat{p}+k+1]} \right)$
even for a finite $C_1\ge 1$ in Theorem~\ref{teo:teo1}.
The following Corollary gives us some further insight
into the behavior of the DC--S scheme.
\begin{corollary}\label{teo:teo2}
 There is a maximum time step
 $\step_{\max}$ such that for a given
 $\step < \step_{\max}$, there is a positive
 and bounded $C(k)$ such that for  $k=1,2,\ldots, \min [p-\widehat{p},q-\widehat{p}+1]$,
 the following holds:
\begin{equation}\label{eq:res_teo2}
 \left\|\vec u(t_0+\step) - \widetilde{\vec u}^{k}_s \right\|
\leq
C(k) \left\|
\widetilde{\vec u}^k_s - \widetilde{\vec u}^0_s
\right\|.
\end{equation}
\end{corollary}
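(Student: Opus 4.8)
The plan is to read off the numerator of (\ref{eq:res_teo2}) from Theorem~\ref{teo:teo1} and to produce, by a triangle inequality, a lower bound of order $\step^{\widehat p+1}$ on the computable increment $\|\widetilde{\vec u}^k_s-\widetilde{\vec u}^0_s\|$. First, for $k$ in the admissible range $1,\ldots,\min[p-\widehat p,\,q-\widehat p+1]$ one has $\widehat p+k+1\le p+1$ and $\widehat p+k+1\le q+2$, so that $\min[p+1,q+2,\widehat p+k+1]=\widehat p+k+1$, and under the hypotheses of Theorem~\ref{teo:teo1} (in particular $C_1<1$ and $C_6\step<1$) bound (\ref{eq:res_teo1}) reads
\begin{equation*}
\left\|\vec u(t_0+\step)-\widetilde{\vec u}^k_s\right\|\le c\,\step^{\widehat p+k+1}.
\end{equation*}
It therefore suffices to show that $\|\widetilde{\vec u}^k_s-\widetilde{\vec u}^0_s\|$ is bounded below by a constant times $\step^{\widehat p+1}$.

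For that I would use
\begin{equation*}
\left\|\widetilde{\vec u}^k_s-\widetilde{\vec u}^0_s\right\|\ge
\left\|\vec u(t_0+\step)-\widetilde{\vec u}^0_s\right\|-\left\|\vec u(t_0+\step)-\widetilde{\vec u}^k_s\right\|
\end{equation*}
and bound the first term from below. Since $\widetilde{\vec u}^0_s$ is obtained in (\ref{eq:ini_split}) by composing splitting steps of lengths $c_1\step,(c_2-c_1)\step,\ldots,(c_s-c_{s-1})\step$, the error $\vec u(t_0+\step)-\widetilde{\vec u}^0_s$ inherits an asymptotic expansion in $\step$ from the splitting-error expansion recalled before (\ref{eq:hyp_local}), its leading term being of order $\widehat p+1$; provided the corresponding coefficient does not vanish —which holds precisely for genuinely non-commuting split operators, the only case in which (\ref{eq:res_teo2}) is not trivially $0\le 0$— there is a $\widetilde c>0$ with $\|\vec u(t_0+\step)-\widetilde{\vec u}^0_s\|\ge\widetilde c\,\step^{\widehat p+1}$ for all small enough $\step$. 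Together with $\|\vec u(t_0+\step)-\widetilde{\vec u}^k_s\|\le c\,\step^{\widehat p+k+1}=o(\step^{\widehat p+1})$, choosing $\step_{\max}$ smaller than $\widetilde c/(2c)$, than $1$, and than the thresholds of Theorem~\ref{teo:teo1}, one gets $c\,\step^k\le\widetilde c/2$ throughout the admissible range of $k$ whenever $\step<\step_{\max}$, whence
\begin{equation*}
\left\|\widetilde{\vec u}^k_s-\widetilde{\vec u}^0_s\right\|\ge\step^{\widehat p+1}\bigl(\widetilde c-c\,\step^k\bigr)\ge\tfrac12\widetilde c\,\step^{\widehat p+1}.
\end{equation*}

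Combining the two estimates gives $\|\vec u(t_0+\step)-\widetilde{\vec u}^k_s\|\le c\,\step^{\widehat p+k+1}=(2c/\widetilde c)\,\step^k\cdot\tfrac12\widetilde c\,\step^{\widehat p+1}\le(2c/\widetilde c)\,\step^k\,\|\widetilde{\vec u}^k_s-\widetilde{\vec u}^0_s\|$, so that $C(k):=(2c/\widetilde c)\,\step^k$ is positive and bounded —by $(2c/\widetilde c)\,\step_{\max}^k$— for $\step<\step_{\max}$, and in fact $C(k)\to0$ as $\step\to0$; that is, the computable increment $\|\widetilde{\vec u}^k_s-\widetilde{\vec u}^0_s\|=\Or(\step^{\widehat p+1})$ overestimates the true error $\Or(\step^{\widehat p+k+1})$ by the factor $\step^{-k}$ and can serve as a conservative a~posteriori error indicator. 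The single genuinely delicate step is the lower bound $\|\vec u(t_0+\step)-\widetilde{\vec u}^0_s\|\ge\widetilde c\,\step^{\widehat p+1}$: it expresses that the initial splitting sweep has order \emph{exactly} $\widehat p$ on the problem at hand, with no fortuitous cancellation of its leading error term, and it is what pins down the threshold $\step_{\max}$ through the ratio $\widetilde c/c$.
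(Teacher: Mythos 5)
Your argument is essentially correct and rests on the same skeleton as the paper's proof: the triangle inequality linking $\vec u(t_0+\step)$, $\widetilde{\vec u}^0_s$ and $\widetilde{\vec u}^k_s$, combined with the fact that $k$ correction sweeps reduce the error by a factor of order $\step^k$ relative to the initial splitting error. The difference lies in how that factor is packaged. The paper asserts the per-iteration proportionality $\|\vec u(t_0+\step)-\widetilde{\vec u}^{k}_s\| = \zeta_k\step\,\|\vec u(t_0+\step)-\widetilde{\vec u}^{k-1}_s\|$ in (\ref{eq:def_alphak}), justified by observing that for $k\le\min[p-\widehat{p},q-\widehat{p}+1]$ the estimate (\ref{eq:teo_est1}) reduces to the splitting term (\ref{eq:DCS_split_order}); iterating gives $\|\vec u(t_0+\step)-\widetilde{\vec u}^{k}_s\|=\sigma_k\step^k\|\vec u(t_0+\step)-\widetilde{\vec u}^{0}_s\|$, and the triangle inequality then yields $C(k)=\sigma_k\step^k/(1-\sigma_k\step^k)$ with $\step_{\max}=(\max_k\zeta_k)^{-1}$. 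You instead use only the upper bound of Theorem \ref{teo:teo1} and supply a separate lower bound $\|\vec u(t_0+\step)-\widetilde{\vec u}^0_s\|\ge\widetilde c\,\step^{\widehat{p}+1}$, obtaining $C(k)=(2c/\widetilde c)\,\step^k$. These are on the same logical footing: the nondegeneracy you state openly (the splitting has order \emph{exactly} $\widehat{p}$, no cancellation of the leading coefficient) is precisely what the paper leaves implicit by writing (\ref{eq:def_alphak}) as an equality with finite, positive $\zeta_k$; your version is the more transparent about this hidden hypothesis and makes the rate $C(k)=\Or(\step^k)$ explicit. What the paper's form buys is that $C(k)$ is expressed through the per-iteration ratios $\zeta_k$, which are exactly the quantities later estimated numerically ($\widetilde\zeta_k$, $\widetilde\sigma_k$) and fed into the error indicator and time-step formulas (\ref{eq:errk_bound2}), (\ref{eq:dtnew}), (\ref{eq:dtnew_kmax}) of Section \ref{sec:error_control}; your constants $c$ and $\widetilde c$ are not accessible in that way. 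Two minor caveats: your parenthetical claim that a vanishing leading splitting coefficient would make (\ref{eq:res_teo2}) ``trivially $0\le 0$'' is not accurate (if the splitting merely has higher order than $\widehat{p}$, neither side need vanish, since quadrature and correction errors remain), though this does not affect your argument; and your $\step_{\max}$ must also lie below the threshold for which the asymptotic lower bound on the splitting error holds, which you acknowledge only implicitly.
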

\begin{proof}
For successive iterations $k$ and $k-1$ such that $k \leq \min [p-\widehat{p},q-\widehat{p}+1]$, we note that (\ref{eq:teo_est1}) reduces to (\ref{eq:DCS_split_order})
and hence there
is a positive constant $\zeta_k$ such that
\begin{equation}\label{eq:def_alphak}
\left\| \vec u(t_0+\step) - \widetilde{\vec u}^{k}_s \right\| =
\zeta_k \step
\left\| \vec u(t_0+\step) - \widetilde{\vec u}^{k-1}_s \right\|,
\end{equation}
and hence
\begin{equation}\label{eq:def_gammak}
\left\| \vec u(t_0+\step) - \widetilde{\vec u}^{k}_s \right\| =
\sigma_k (\step)^k
\left\| \vec u(t_0+\step) - \widetilde{\vec u}^0_s \right\|
\end{equation}
where
$\sigma_k = \Pi _{j=1}^k \zeta_j$.
Combining (\ref{eq:def_gammak})
with
\begin{equation*}\label{eq:err0errk}
\left\| \vec u(t_0+\step) - \widetilde{\vec u}^0_s \right\| \leq
\left\| \vec u(t_0+\step) - \widetilde{\vec u}^k_s \right\| +
\left\| \widetilde{\vec u}^k_s - \widetilde{\vec u}^0_s \right\|,
\end{equation*}
yields (\ref{eq:res_teo2}) with
\begin{equation}\label{eq:errk_bound}
C(k) =
\frac{\sigma_k (\step)^k}{1-\sigma_k (\step)^k}
\end{equation}
and
$\step_{\max}=(\max_k \zeta_k)^{-1}$
such that $C(k)>0$ for
any given $\step < \step_{\max}$.
\end{proof}

Notice that (\ref{eq:def_alphak})
explicitly reflects the increase of the approximation
order with every correction iteration established in
Theorem \ref{teo:teo1}.
A maximum time step per iteration
$\step_{\max,k}$ can be also defined
as $\step_{\max,k}=(\zeta_k)^{-1}$,
which in particular implies that
no correction is expected
for $\step \geq \step_{\max,k}$
according to (\ref{eq:def_alphak}).
The maximum time step is thus given by
$\step_{\max}=\min_k \step_{\max,k}$
in Corollary \ref{teo:teo2}.  In practice we note that
$\step_{\max,k}$ can be larger than
$\step_{\max}$ during a given iteration.

\section{Time stepping and error control}\label{sec:error_control}
Since the numerical integration within the DC--S method
is in practice performed by a splitting solver
with no stability constraints, we introduce a time step
selection strategy based on a user--defined accuracy tolerance.
Denoting
$err_k := \| \vec u(t_0+\step) - \widetilde{\vec u}^k_s \|$,
as the approximation error of the DC--S scheme (\ref{eq2:def_utildeksplit})
after a time step $\step$,
Corollary \ref{teo:teo2} gives us an estimate of
$err_k$, where in particular
$err_0$ stands for the initial splitting error.
In practice we can numerically estimate the error by computing
(\ref{eq:res_teo2}) with (\ref{eq:errk_bound}),
for which we first need to estimate the $\sigma_k$ coefficients.

Let us introduce the following approximation to
$\vec u(t_0+\step)$
according to (\ref{eq2:runge_kutta_recast}),
based on the original $s$--stage Runge--Kutta scheme:
\begin{equation}\label{eq2:def_ubar}
\bar{\vec u}^k_s = \vec u_0 + I_{t_0}^{t_s} (\widetilde{\vec U}^k).
\end{equation}
Notice that $\bar{\vec u}^k_s$ is in general different from
$\widehat{\vec u}^k_s$
computed according to (\ref{eq2:def_uhat}).
In particular, for sufficiently small
$\step$, $\bar{\vec u}^0_s$ should be a better approximation to
$\vec u(t_0+\step)$ than the splitting solution $\widetilde{\vec u}^0_s$ given by (\ref{eq:ini_split}).
Therefore, we assume that
\begin{equation}\label{eq:split_err}
\left\| \vec u(t_0+\step) - \widetilde{\vec u}^0_s \right\| \leq
\left\|  \vec u(t_0+\step) -  \bar{\vec u}^0_s \right\| +
\left\| \bar{\vec u}^0_s - \widetilde{\vec u}^0_s \right\| \approx
\left\| \bar{\vec u}^0_s - \widetilde{\vec u}^0_s \right\|,
\end{equation}
and suppose that the same property holds for the
corrective iterations,
taking also into account that
the fully coupled system
$\vec F(\widetilde{\vec u}^k(t))$
is evaluated in (\ref{eq2:def_ubar})
for $\bar{\vec u}^k_s$.
Introducing
$\overline{err}_k = \| \bar{\vec u}^k_s - \widetilde{\vec u}^k_s \|$,
we approximate $\zeta_k$ in (\ref{eq:def_alphak}) by
\begin{equation*}
\widetilde{\zeta}_k =
\frac{1}{\step}
\frac{\overline{err}_k}{\overline{err}_{k-1}}, \qquad
k=1,2,\ldots,
\end{equation*}
and $\sigma_k$ by $\widetilde{\sigma}_k = \Pi _{j=1}^k \widetilde{\zeta}_j$.
We then estimate the error
of the DC--S method,
$\widetilde{err}_k$,
according to (\ref{eq:res_teo2}):
\begin{equation}\label{eq:errk_bound2}
\widetilde{err}_k =
\left[
\frac{\widetilde{\sigma}_k (\step)^k}{1-\widetilde{\sigma}_k (\step)^k}
\right]
\left\| \widetilde{\vec u}^k_s - \widetilde{\vec u}^0_s \right\|, \qquad
k=1,2,\ldots.
\end{equation}
The initial error, that is, the splitting error
$err_0$ can be directly approximated by $\overline{err}_0$,
following (\ref{eq:split_err}),
and thus $\widetilde{err}_0=\overline{err}_0$.

Having an estimate of the approximation error
(\ref{eq:errk_bound2}), we can define
an accuracy tolerance $\eta$ such that
for a user--defined $\eta$,
no further correction iterations are performed
if $\widetilde{err}_k \leq \eta$
is satisfied.
Therefore,
by supposing that
\begin{equation*}\label{eq:eta_dtnew}
\eta =
\left[
\frac{\widetilde{\sigma}_{k} (\step_{\new,k})^k}
{1-\widetilde{\sigma}_{k} (\step_{\new,k})^k}
\right]
\left\|\widetilde{\vec u}^k_s - \widetilde{\vec u}^0_s \right\|
\end{equation*}
and comparing it to (\ref{eq:errk_bound2}),
we have that
\begin{equation}\label{eq:dtnew}
\step_{\new,k} =
\left[
\frac{\eta}
{\left[1-\widetilde{\sigma}_{k} (\step)^k\right]\widetilde{err}_k
+ \widetilde{\sigma}_{k} (\step)^k\eta}
\right]^{1/k}
\step.
\end{equation}
We can thus estimate the
new time step as $\step_{\new}=\nu
\times
\min (\step_{\new,k},\step_{\max,k})$,
after $k$ iterations,
where
$\step_{\max,k}=(\widetilde{\zeta}_k)^{-1}$
and $\nu$ ($0<\nu\leq 1$) is a security factor.
Note that this new time step supposes that $k$
correction iterations will also be used for the next time step.
By defining a maximum number of iterations
$k_{\max}$, another way of estimating the
new time step supposes that
\begin{equation}\label{eq:eta_dtnew_kmax}
\eta =
\left[
\frac{\widetilde{\sigma}_{k_{\max}} (\step_{\new,k_{\max}})^{k_{\max}}}
{1-\widetilde{\sigma}_{k_{\max}} (\step_{\new,k_{\max}})^{k_{\max}}}
\right]
\left\|\widetilde{\vec u}^k_s - \widetilde{\vec u}^0_s \right\|,
\end{equation}
and hence,
\begin{equation}\label{eq:dtnew_kmax}
\step_{\new,k_{\max}} =
\left[
\frac{\eta}
{\left[1-\widetilde{\sigma}_{k} (\step)^k\right]\widetilde{err}_k
+ \widetilde{\sigma}_{k} (\step)^k\eta}
\right]^{1/k_{\max}}
\left[
\frac{\widetilde{\sigma}_{k}}{\widetilde{\sigma}_{k_{\max}}}
\right]^{1/k_{\max}}
\step^{k/k_{\max}},
\end{equation}
where $\widetilde{\sigma}_{k_{\max}}$ is approximated
by $(\widetilde{\zeta}_k)^{k_{\max}-k}\widetilde{\sigma}_{k}$.
In this case the new time step can be computed
as $\step_{\new}=\nu \times
\min (\step_{\new,k_{\max}},\step_{\max,k})$,
with the maximum number of iterations
given, for instance, by
$k_{\max}=\min [p-\widehat{p},q-\widehat{p}+1]$.
Notice that the time step $\step_{\new,k_{\max}}$
is potentially larger than $\step_{\new,k}$
for $k<k_{\max}$, since the former needs more iterations to
attain an accuracy of $\eta$ according to (\ref{eq:eta_dtnew_kmax}).
The best choice between $\step_{\new,k}$ and $\step_{\new,k_{\max}}$
depends largely on the problem and, in particular, on the corresponding
computational costs associated, for instance, with the
function evaluations or with the splitting procedure.
In general both approaches look for approximations
of accuracy $\eta$, where smaller time steps involve
fewer correction iterations and {\it vice--versa}.
We could also consider, for instance,
$\step_{\new}=\nu \times
\min (\step_{\new,k},\step_{\new,k_{\max}},\step_{\max,k})$.

Recalling that the splitting error can be estimated as
$\widetilde{err}_0=\overline{err}_0=\| \bar{\vec u}^0_s - \widetilde{\vec u}^0_s \|$,
we can also define a new way to adapt the splitting time step
for stiff problems, as previously considered in
\cite{MR2847238}.
Given the splitting approximation (\ref{eq:ini_split}),
one needs only
to compute $\bar{\vec u}^0_s$, according to (\ref{eq2:def_ubar}).
Defining $\eta_{\splt}$ as the accuracy tolerance for the
splitting approximation, we can compute the splitting time step
$\step_{\new,\splt}$ as
\begin{equation}\label{eq:dtnew_split}
\step_{\new,\splt} =
\nu
\left[
\frac{\eta_{\splt}}
{\widetilde{err}_0}
\right]^{1/(\widehat{p}+1)}
\step,
\end{equation}
assuming from (\ref{eq:hyp_local}) that
$\widetilde{err}_0= C_4 \step^{\widehat{p}+1}$
and
$\eta = C_4 \step_{\new,\splt}^{\widehat{p}+1}$,
where $\widehat{p}$ stands for the order of the splitting scheme.
The same technique may be applied to
any other low order time integration method.
In particular by setting $\eta_{\splt}=\eta$,
we can estimate
the splitting time step $\step_{\new,\splt}$
that would be required to attain
accuracy $\eta$ through (\ref{eq:dtnew_split}), in contrast to
$\step_{\new,k}$ or $\step_{\new,k_{\max}}$.

\subsection{Predicting approximation errors}
If $k_{\max}$ correction iterations have been
performed and the error estimate is still too large ($\widetilde{err}_{k_{\max}} > \eta$), then
the DC--S scheme defined in (\ref{eq2:def_utildeksplit}),
together with the initial splitting approximation
(\ref{eq:ini_split}), must be restarted with a new time
step $\step_{\new}$, computed as previously defined.  In this case, these $k_{\max}$ iterations are wasteful, and hence we derive a method for predicting $\widetilde{err}_{k_{\max}}$ based on the current error estimate $\widetilde{err}_k$.
Denoting the predicted final error estimate
as $\widetilde{err}^{*}_{k_{\max}}$,
we approximate it according to
\begin{equation*}\label{eq:pred_errk}
\widetilde{err}^{*}_{k_{\max}} =
(\widetilde{\zeta}_k \step)^{k_{\max}-k}
\widetilde{err}_k, \qquad
k=1,\ldots,k_{\max}-1,
\end{equation*}
following (\ref{eq:def_alphak}).
If for some $k< k_{\max}-1$,
we have that
$\widetilde{err}^{*}_{k_{\max}}> \eta$,
we restart the scheme
(\ref{eq:ini_split})--(\ref{eq2:def_utildeksplit})
with a new time step given by
\begin{equation*}\label{eq:pred_dtnew_kmax}
\step^{*}_{\new,k_{\max}} =
\nu
\left[
\frac{\eta}
{(\widetilde{\zeta}_k)^{k_{\max}-k}
\widetilde{err}_k}
\right]^{1/k_{\max}}
\step^{k/k_{\max}},
\end{equation*}
supposing that
$\widetilde{err}_{0,\new} = \widetilde{err}_0$ in
\begin{equation*}
\eta =
\widetilde{\sigma}_{k_{\max}}(\step^{*}_{\new,k_{\max}})^{k_{\max}}
\left[
\frac{\widetilde{err}_k}
{\widetilde{\sigma}_{k} (\step)^k}
\right],
\end{equation*}
where
$\widetilde{\sigma}_{k_{\max}}=
(\widetilde{\zeta}_k)^{k_{\max}-k}\widetilde{\sigma}_{k}$.
For the initial approximation $k=0$,
we predict $\widetilde{\sigma}_{k_{\max}}$ based on the
previous $\widetilde{\sigma}_{k_{\max},\old}$
and time step $\step_{\old}$ according to
\begin{equation*}\label{eq:pred_err0}
\widetilde{err}^{*}_{k_{\max}} =
\widetilde{\sigma}^{*}_{k_{\max}}
(\step)^{k_{\max}}
\widetilde{err}_0, \qquad
\widetilde{\sigma}^{*}_{k_{\max}}=
\widetilde{\sigma}_{k_{\max},\old}
\left[
\frac{\step}
{\step_{\old}}
\right]^{k_{\max}};
\end{equation*}
with the corresponding time step
\begin{equation*}\label{eq:pred_dtnew_kmax0}
\step^{*}_{\new,k_{\max}} =
\nu
\left[
\frac{\eta}
{\widetilde{\sigma}^{*}_{k_{\max}}
\widetilde{err}_0}
\right]^{1/k_{\max}},
\end{equation*}
supposing again that $\widetilde{err}_{0,\new} = \widetilde{err}_0$.

\section{Space discretization errors}\label{sec:space}
So far, only temporal discretization errors were investigated.
For the sake of completeness
we briefly study in the following the impact of spatial discretization
errors in the approximations computed with the DC--S method.
Contrary to Section~\ref{sec:analysis}, only an empirical approach
is considered.
Denoting by
$u(t_0+\step,\x)\vert_{\X}$, the analytic
solution of problem (\ref{eq:gen_prob})
projected on the grid $\X$ at time $t_0+\step$, and by
$\vec u(t_s)$, the exact solution of the semi--discrete problem
(\ref{eq:gen_prob_disc}) at time $t_s=t_0+\step$;
we can assume that for the approximation $\widetilde{\vec u}^k_s$
coming from (\ref{eq2:def_utildeksplit}),
the following bound is satisfied
\begin{equation}\label{eq:err_uk_disc}
\left\| u(t_0+\step,\x)\vert_{\X}- \widetilde{\vec u}^k_s \right\| \leq
\left\| u(t_0+\step,\x)\vert_{\X} - \vec u(t_s) \right\| +
\left\| \vec u(t_s) - \widetilde{\vec u}^k_s \right\|.
\end{equation}
That is, the approximation error of the DC--S
method
is bounded by the sum of space discretization errors,
$\| u(t_0+\step,\x)\vert_{\X} - \vec u(t_s) \|$,
and time discretization errors,
$\| \vec u(t_s) - \widetilde{\vec u}^k_s \|$.
As previously established in
Theorem \ref{teo:teo1},
the local order of approximation of the latter is given by
$\min[p+1,q+2,\widehat{p}+k+1]$.
Therefore,
given a semi--discrete problem
(\ref{eq:gen_prob_disc}) with solution $\vec u(t)$,
high order approximations can be computed
by means of (\ref{eq2:def_utildeksplit})
within an accuracy tolerance of $\eta$.
In particular, for sufficiently fine grids
and/or if high order space discretization schemes
are used,
space discretization errors may become
small enough and the approximation
error of the DC--S method would be estimated as $\eta$, also
with respect to the analytic solution of problem (\ref{eq:gen_prob})
according to (\ref{eq:err_uk_disc}).

\subsection{Introducing high order space discretization}\label{subsubsec:high_order_space}
Spatial resolution is a critical
aspect for many problems, and high order space discretization schemes are
often required.
We consider high order schemes to discretize
$F(u)$ in space
for problem (\ref{eq:gen_prob}),
and denote such an approximation as $\vec F^{\HO}(\vec u)$,
so that
$\vec u^{\HO}(t)$ stands for the solution of the
corresponding semi--discrete problem.
We can thus use scheme (\ref{eq2:def_utildeksplit})
to approximate $\vec u^{\HO}(t)$ by considering
$\vec F^{\HO}(\vec u)$ in the computation of the
$\widehat{\vec u}^k_i$'s in (\ref{eq2:def_uhat}),
and $\vec F^{\HO}_i(\vec u)$, $i=1,2,\ldots$,
for the splitting approximations in (\ref{eq2:def_utildeksplit})
and (\ref{eq:ini_split}).
Let us call this approximation $(\widetilde{\vec u}^k_s)^{\HO}$.
Recalling that the
numerical  error
$\| \vec u^{\HO}(t_s) - (\widetilde{\vec u}^k_s)^{\HO} \|$
can be controlled by an accuracy tolerance $\eta$,
a high order space discretization also results in
better error control with respect to the analytic solution
$u(t,\x)$, following the discussion in the previous section.

A more efficient alternative, however, uses
$\vec F^{\HO}(\vec u)$ only in the quadrature formulas
for the $\widehat{\vec u}^k_i$'s
in (\ref{eq2:def_uhat}), and
low order discrete $\vec F_i(\vec u)$'s, $i=1,2,\ldots$
for the splitting approximations in (\ref{eq2:def_utildeksplit})
and (\ref{eq:ini_split}).
The latter procedure results in an approximation
$(\widetilde{\vec u}^k_s)^{\widetilde{\HO}}$
to $(\widetilde{\vec u}^k_s)^{\HO}$.
Since low order discretization in space is used to evaluate
the corrections,
that is, to predict the solutions at the collocation nodes of the quadrature formulas,
we expect that
$(\widetilde{\vec u}^k_s)^{\widetilde{\HO}}$
becomes equivalent to
$(\widetilde{\vec u}^k_s)^{\HO}$
after some iterations;
if this does happen, then $(\widetilde{\vec u}^k_s)^{\widetilde{\HO}}$
becomes an approximation of maximum order $p=2s-1$ in time,
according  to Theorem \ref{teo:teo1},
and of high order in space
to the analytic solution
$u(t,\x)$ of problem (\ref{eq:gen_prob}).
Nevertheless, taking into account the hybrid structure
of $(\widetilde{\vec u}^k_s)^{\widetilde{\HO}}$ in terms of space
discretization errors, a simple decomposition of errors
like (\ref{eq:err_uk_disc}) is no longer possible.
Moreover, we cannot expect that each correction iteration
increases the order of approximation by one and consequently,
the error estimates previously established
in \S\ref{sec:error_control}
are no longer
valid for this configuration.

\section{Numerical illustrations: The Belousov--Zhabotinski reaction}\label{sec:num_res}
Let us consider the numerical approximation of a model for
the Belousov--Zha\-bo\-tins\-ki (BZ) reaction, a catalyzed oxidation of an organic species by acid
bromated ion
(see \cite{Epstein98} for more details and illustrations).
The present mathematical formulation \cite{Field72,Scott94}
takes into account three species: hypobromous acid $\mathrm{HBrO_2}$,
bromide ions $\mathrm{Br^-}$, and cerium (IV).
Denoting by $a=[\mathrm{Ce(IV)}]$, $b=[\mathrm{HBrO_2}]$, and $c=[\mathrm{Br^-}]$,
we obtain a very stiff system of three PDEs given by
\begin{equation} \label{eq4:bz_eq_3var_diff}
\left.
\begin{array}{l}
\partial_t a - D_a\, \partial^2 _\x a = \ds \frac{1}{\mu}(-qa-ab+fc),\\[1.75ex]
\partial_t b - D_b\, \partial^2 _\x b = \ds \frac{1}{\varepsilon}\left(qa-ab+b(1-b)\right),\\[1.75ex]
\partial_t c - D_c\, \partial^2 _\x c = b-c,
\end{array}
\right\}
\end{equation}
where $\x \in \R^d$,
with real, positive
parameters: $f$, small $q$, and small $\varepsilon$ and
$\mu$, such that  $\mu \ll \varepsilon \ll 1$.
In this study: $\varepsilon = 10^{-2}$,
$\mu = 10^{-5}$, $f=1.6$, $q=2\times 10^{-3}$;
with diffusion coefficients:
$D_a=2.5\times 10^{-3}$, $D_b=2.5\times 10^{-3}$, and $D_c=1.5\times 10^{-3}$.
The dynamical system associated with this problem
models reactive, excitable media with a
large time scale spectrum (see \cite{Scott94}
for more details).
The spatial configuration with
the addition of diffusion develops propagating wavefronts
with steep spatial gradients.

\subsection{Numerical analysis of errors}\label{subsec:temp_err}
We consider problem
(\ref{eq4:bz_eq_3var_diff}) in a 1D configuration,
discretized on a uniform grid of 1001 points
over a space region of $[0,1]$.
A standard, second order, centered finite differences scheme
is employed for the diffusion term.
To obtain an initial condition, we initialize the problem
with a discontinuous profile close to the left boundary;
we then integrate until the BZ wavefronts are fully developed.
Figure~\ref{fig:sol_BZ_nx1001} shows the time evolution of the
propagating waves for a time window of $[0,1]$.
For comparison,
we can solve the
semi--discrete problem associated with (\ref{eq4:bz_eq_3var_diff})
using a dedicated solver for stiff ODEs.
We consider the Radau5 solver \cite{Hairer96},
based on a fifth order,
implicit Runge--Kutta scheme built upon
the RadauIIA coefficients in (\ref{eq:radau5}).
The reference solution for the
semi--discrete problem is thus given by
the solution obtained with Radau5 over the time interval $[0,1]$,
computed with a fine tolerance: $\eta_{\rm Radau5} = 10^{-14}$.
\begin{figure}[!htb]
\begin{center}
\includegraphics[width=0.49\textwidth]{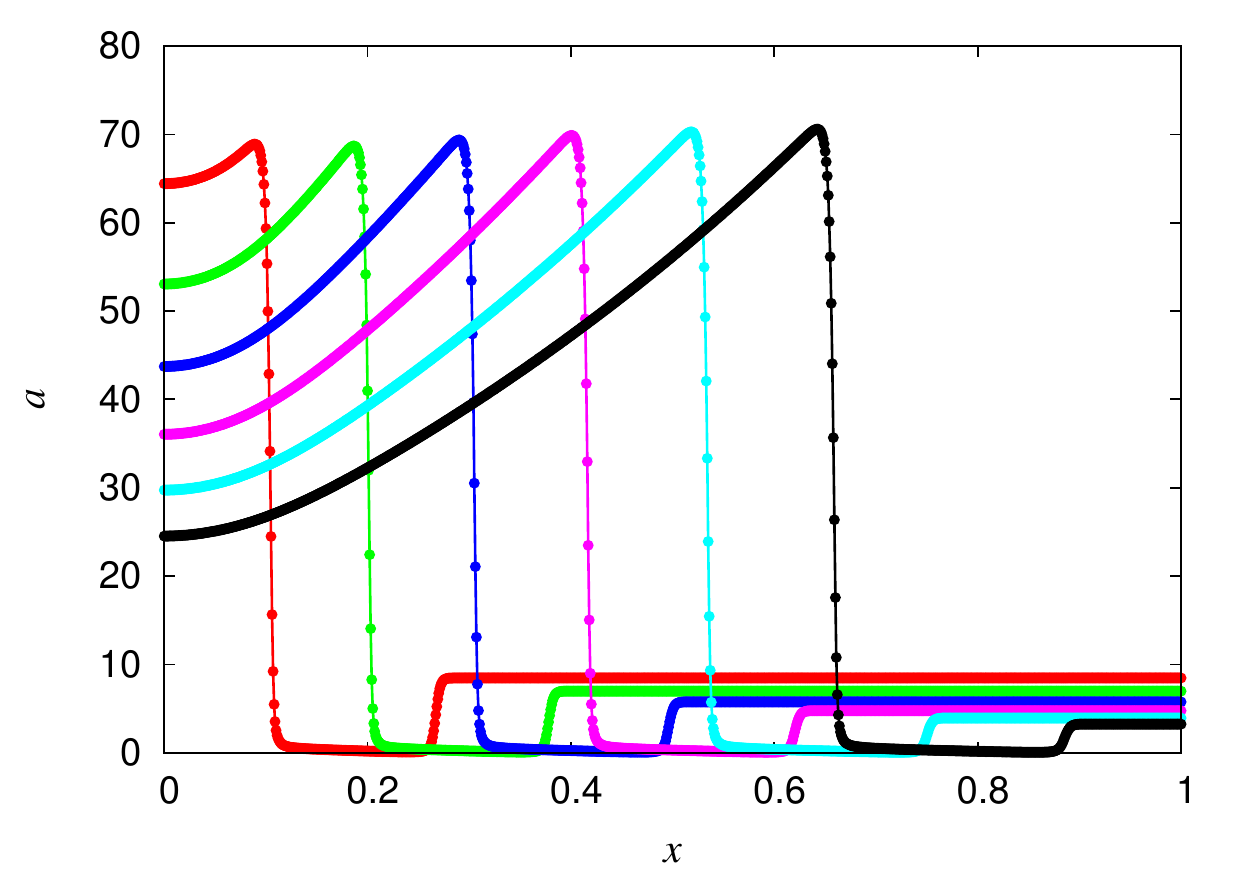}
\includegraphics[width=0.49\textwidth]{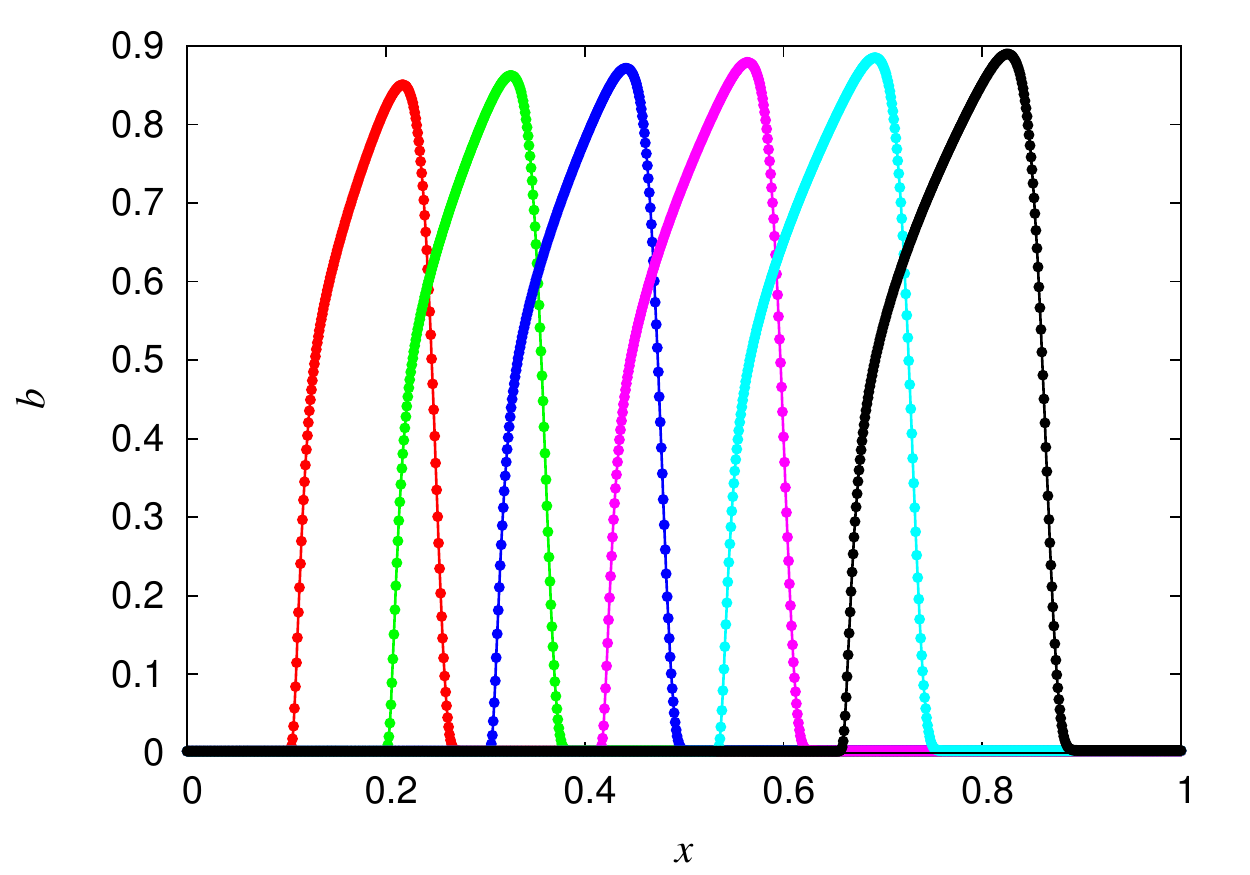}
\includegraphics[width=0.49\textwidth]{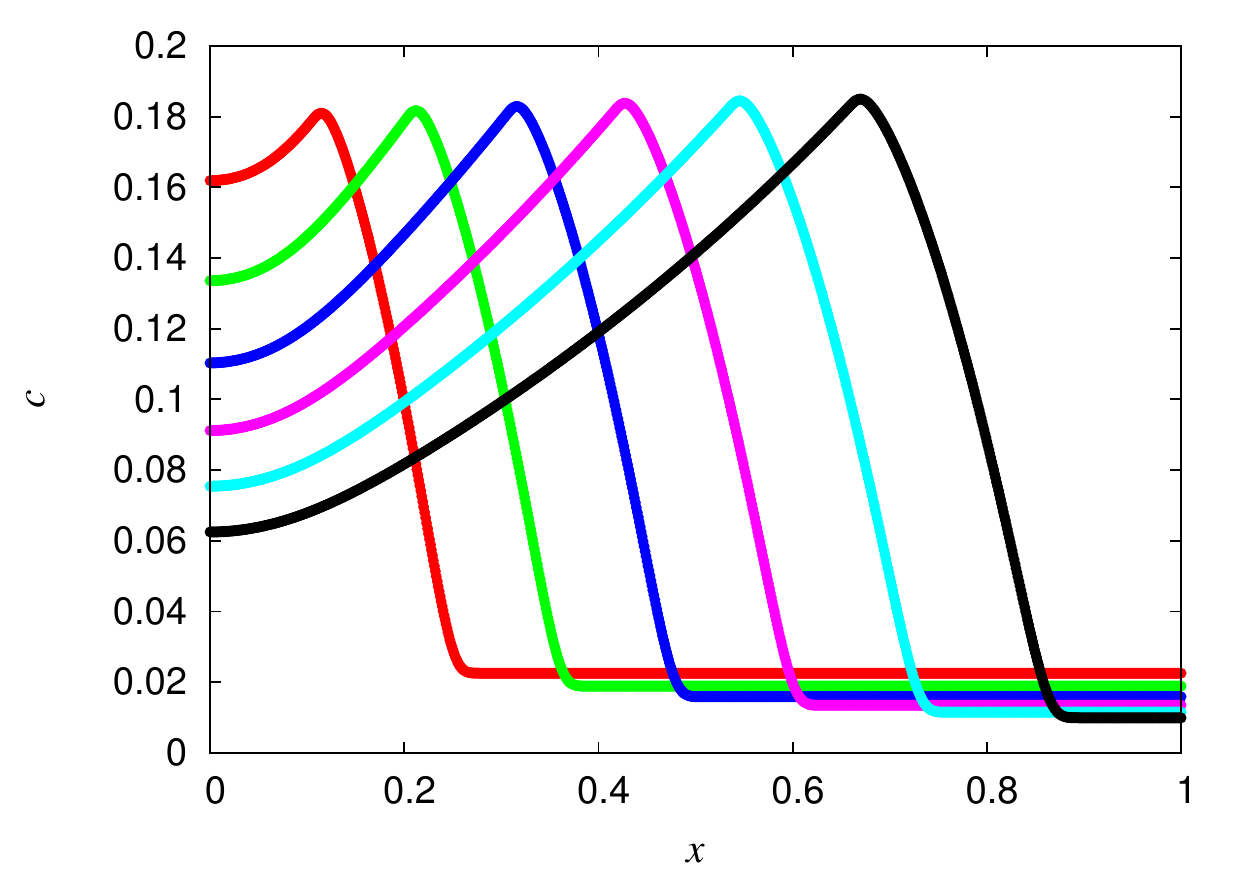}
\end{center}
\caption{BZ propagating waves for variables $a$ (top left),
$b$ (top right), and $c$ (bottom), at time intervals of $0.2$
within $[0,1]$ from left to right.}
\label{fig:sol_BZ_nx1001}
\end{figure}

We consider the splitting solver introduced in
\cite{Duarte11_SISC}
for the initialization of the iterative algorithm
(\ref{eq:ini_split}) and the corrective stages
in (\ref{eq2:def_utildeksplit})
for the DC--S scheme.
Radau5 is used to integrate point by point
the chemical terms in (\ref{eq4:bz_eq_3var_diff}),
while the diffusion problem is integrated with a
fourth order, stabilized explicit Runge--Kutta method:
ROCK4 \cite{MR1923724}.
(See \cite{Duarte11_ESAIM,Dumont2013} and discussions therein
for a more complex application considering the same splitting
solver.)
Tolerances of these solvers are set to a
reasonable value of
$\eta_{\rm Radau5} = \eta_{\rm ROCK4} = 10^{-5}$,
taking into account that the splitting solver
only provides approximations to the solutions at the collocation
nodes used in the high order quadrature formulas.
Again, the RadauIIA coefficients of order 5
given in (\ref{eq:radau5}), that is, with
$s=3$ nodes,
are considered for the
quadrature formulas in (\ref{eq2:def_uhat}).
In what follows
we consider both Lie and Strang splitting schemes,
(\ref{eq:lie}) and (\ref{eq:strang}),
ending with the time integration
of the reaction terms.
The latter is particularly relevant for relatively
large splitting time steps \cite{Descombes04,Descombes_LM}.
\begin{figure}[!htb]
\begin{center}
\includegraphics[width=0.49\textwidth]{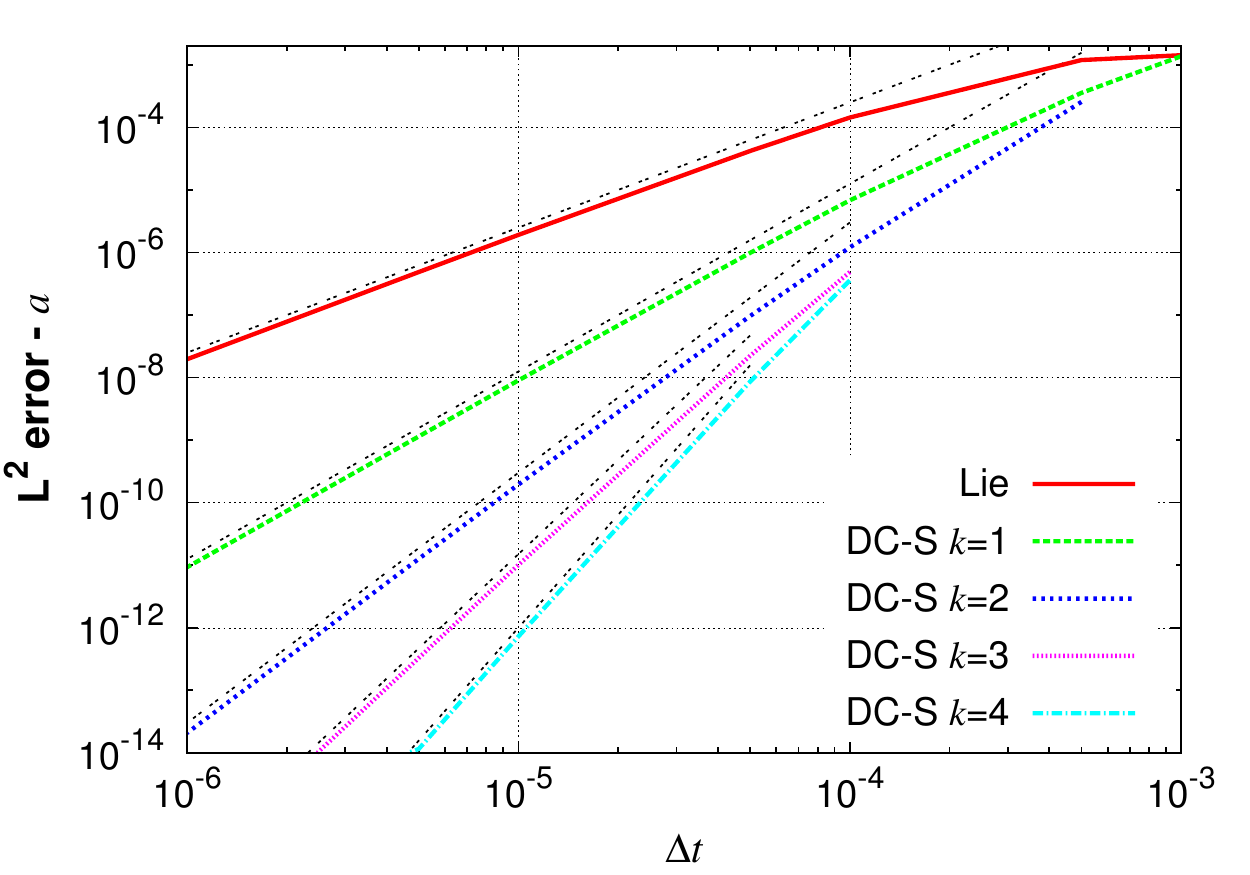}
\includegraphics[width=0.49\textwidth]{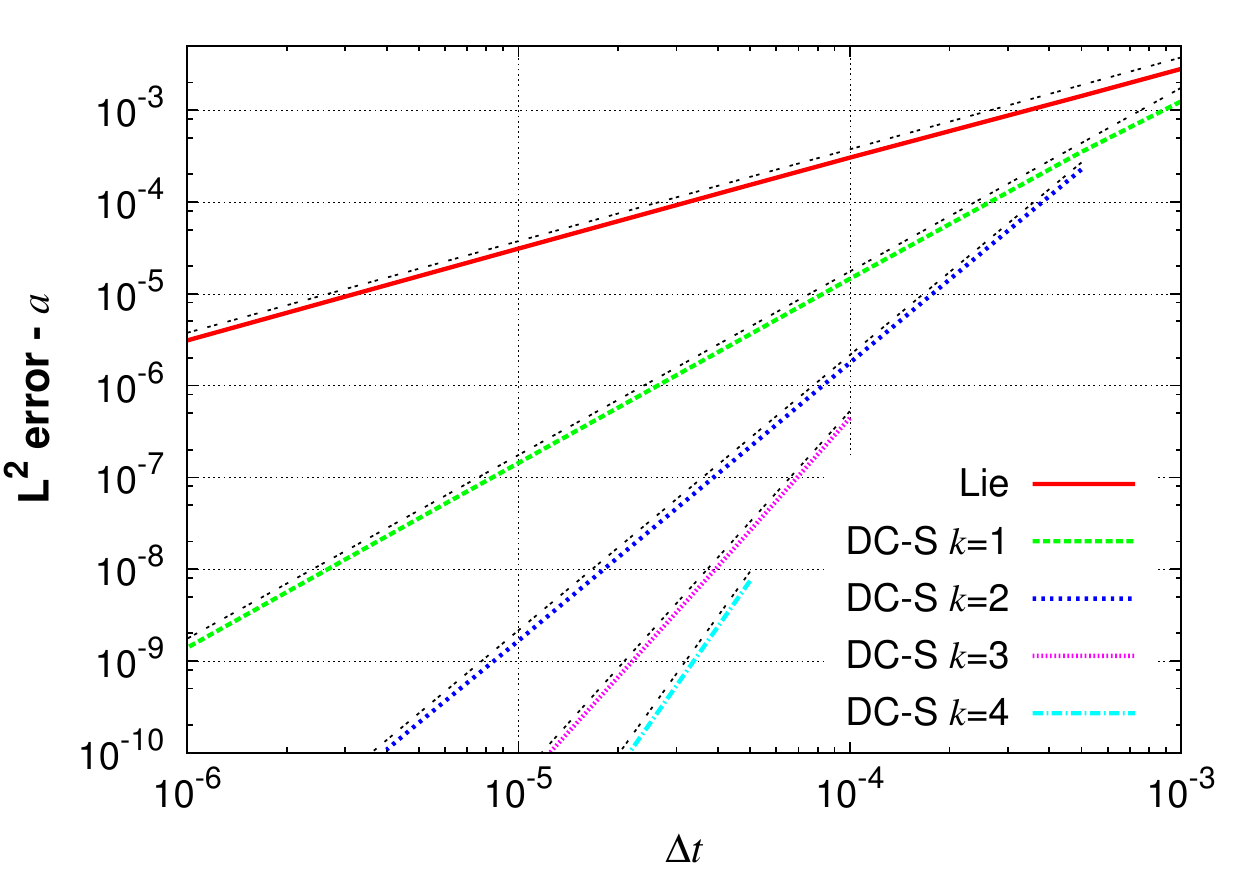}
\end{center}
\caption{Local (left) and global (right) $L^2$--errors
for the DC--S scheme with Lie splitting.
Dashed lines of slopes 2 to 6 (left), and 1 to 5 (right)
are also depicted.}
\label{fig:err_lie_nx1001}
\end{figure}
\begin{figure}[!htb]
\begin{center}
\includegraphics[width=0.49\textwidth]{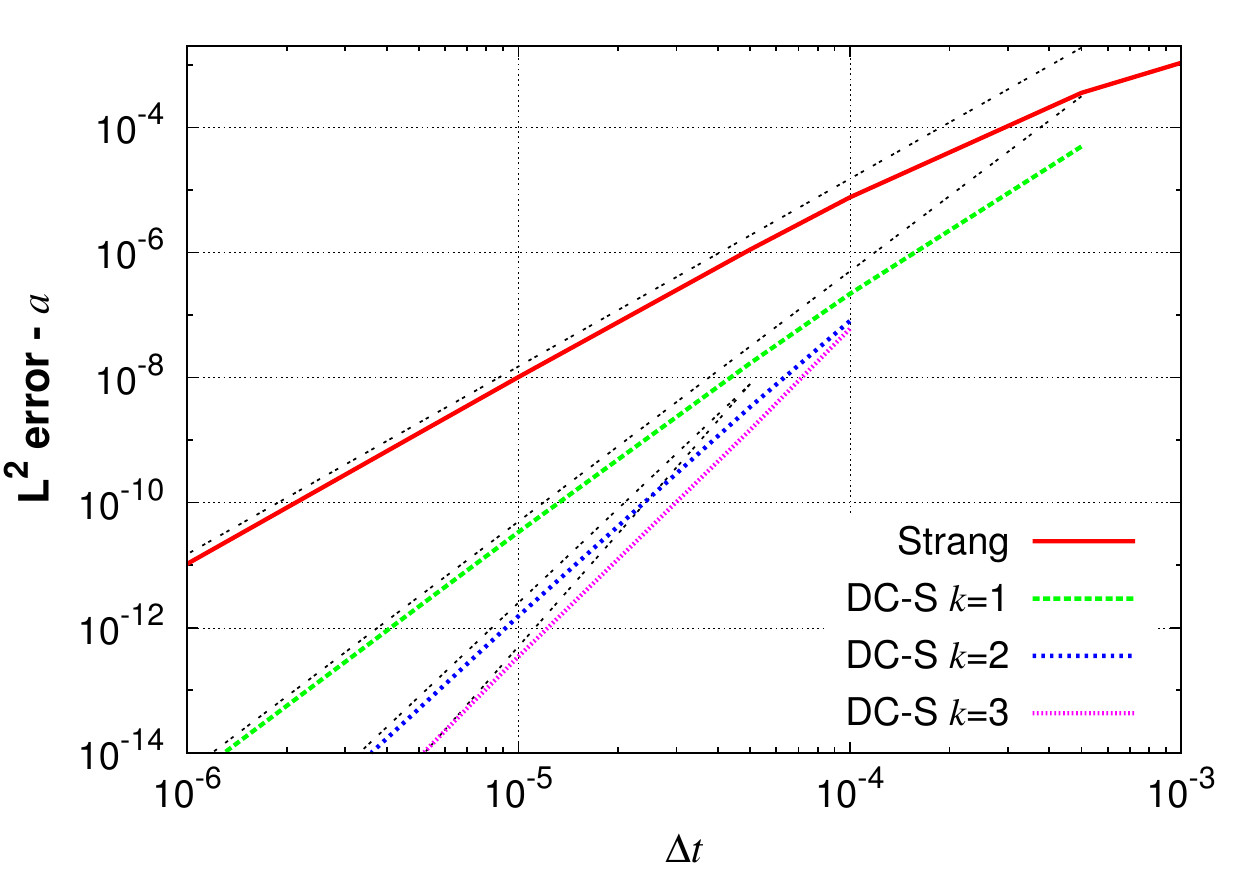}
\includegraphics[width=0.49\textwidth]{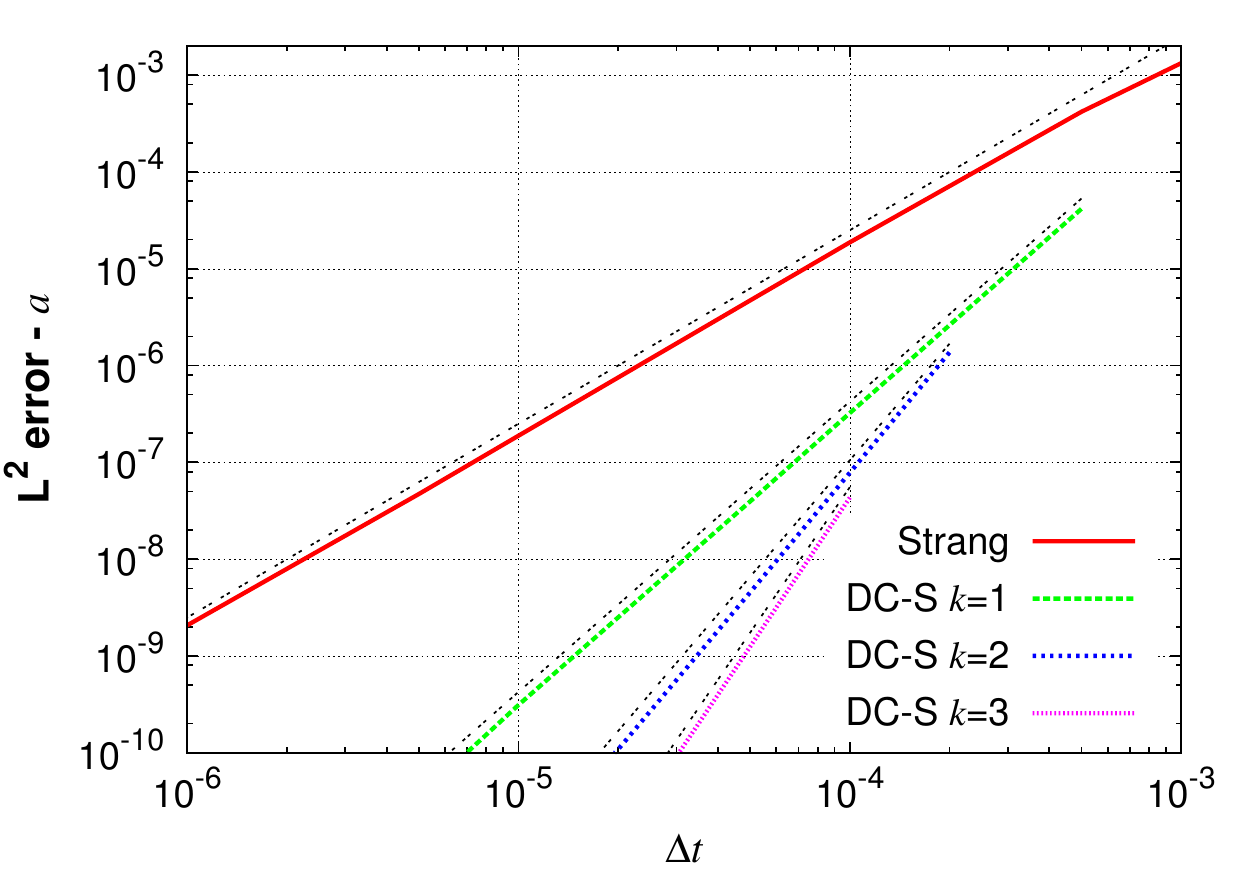}
\end{center}
\caption{Local (left) and global (right) $L^2$--errors
for the DC--S scheme with Strang splitting.
Dashed lines of slopes 3 to 6 (left), and 2 to 5 (right)
are also depicted.}
\label{fig:err_str_nx1001}
\end{figure}

Figure \ref{fig:err_lie_nx1001}
illustrates local and global $L^2$--errors for various
time steps $\step$
with respect to the reference solution,
using the
Lie scheme as the splitting solver into the DC--S scheme.
The Lie solution
in Figure \ref{fig:err_lie_nx1001}
corresponds to the splitting approximation with splitting time step
$\step_{\splt} = \step$.
Local errors are evaluated at $t_0+\step$,
starting in all cases from the reference solution at
$t_0=0.5$.
Global errors are evaluated at final time, $t=1$,
after integrating in time with a constant time step $\step$.
All  $L^2$--errors are scaled by the maximum norm of
variable $a(t,x)$
at the corresponding time.
For this particular problem
the highest order
$p=2s-1=5$
is attained and
local errors behave like
$\Or(\step^{\min[6,2+k]})$
according to
Theorem \ref{teo:teo1},
with $\widehat{p}=1$ for the Lie solver.
Notice that very fine tolerances do not
need to be considered
for the solvers used within the splitting method.
For relatively large time steps we observe the
well--known loss of order related to splitting
schemes on stiff PDEs, as investigated in
\cite{Descombes_LM},
which propagates throughout the DC--S iterations.
Nevertheless, for this particular problem,
this loss of order is
substantially compensated during the time integration,
as observed in the global errors at the final integration time.
Same remarks can be made for the DC--S scheme with Strang
splitting and $\widehat{p}=2$,
as seen in Figure \ref{fig:err_str_nx1001}.
However, the iterative scheme attains
a local order between $q+2=5$ and $p+1=6$,
even though the global errors behave like
$\Or(\step^{\min[5,2+k]})$.

Using the time--stepping procedure established in \S\ref{sec:error_control},
Figure~\ref{fig:err_screen_nx1001}
shows the estimated values of the local error
$\widetilde{err}_k$, according to (\ref{eq:errk_bound2}),
compared to the actual local errors illustrated in
Figures \ref{fig:err_lie_nx1001} and
\ref{fig:err_str_nx1001}.
Recall that $\widetilde{err}_0$ stands for the splitting error.
The estimated maximum time steps at each corrective iteration
$\step_{\max,k}=(\widetilde{\zeta}_k)^{-1}$,
are also indicated.
They represent the maximum time step at which corrections
can be iteratively introduced by the DC--S scheme,
which corresponds to the intersection of local errors
at different iterations in Figures \ref{fig:err_lie_nx1001} and
\ref{fig:err_str_nx1001}.
Notice that the loss of order was not considered in
\S\ref{sec:error_control} to derive these estimates;
however, the computations can be safely performed
because
$\widetilde{err}_k$ overestimates the real local error
for relatively large time steps,
while the $\step_{\max,k}$'s are also smaller than the real ones.
The bound (\ref{eq:res_teo2}) in Corollary \ref{teo:teo2},
on which $\widetilde{err}_k$ is based,
is nevertheless formally guaranteed up to
$k=\min[p-\widehat{p},q-\widehat{p}+1]$, that is,
$k=3$ and $k=2$ for the Lie and Strang splitting, respectively.
\begin{figure}[!htb]
\begin{center}
\includegraphics[width=0.49\textwidth]{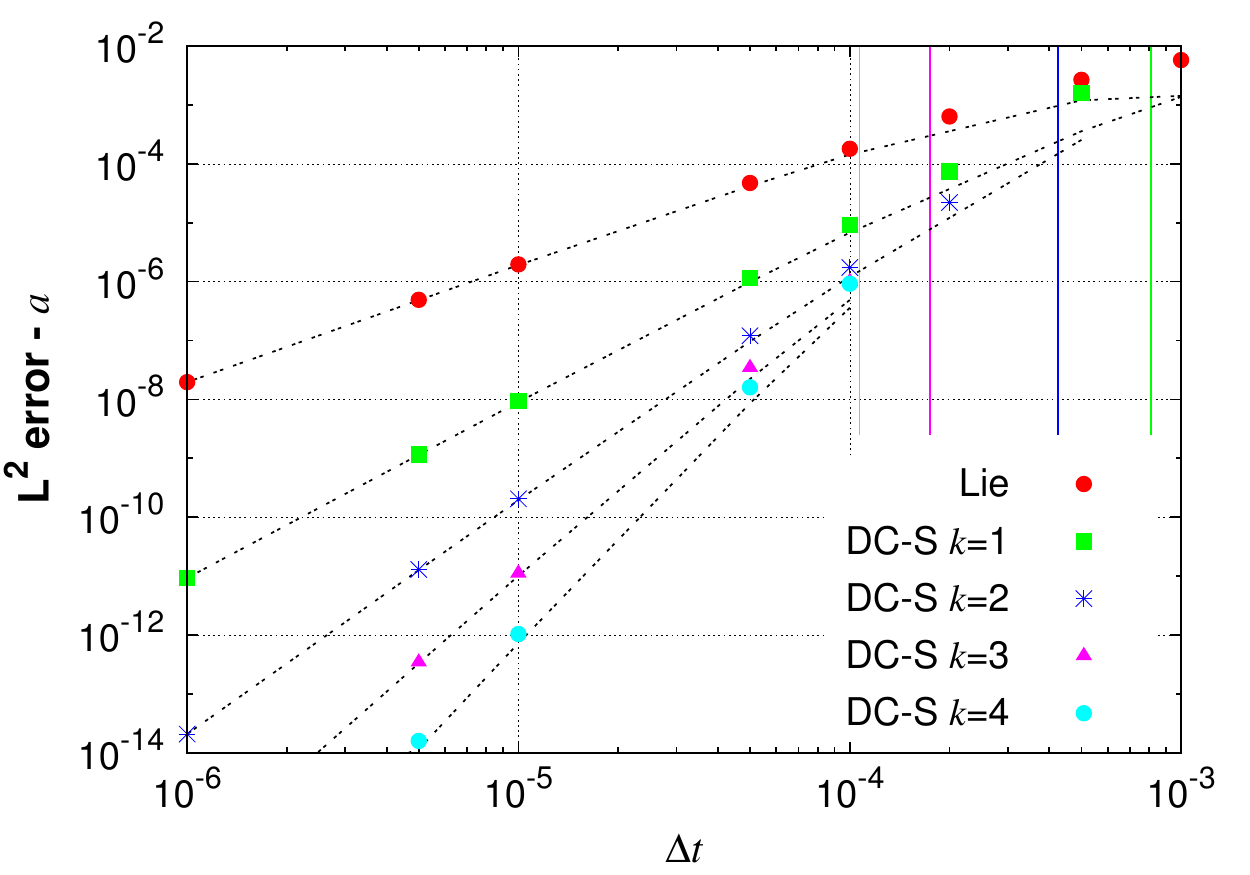}
\includegraphics[width=0.49\textwidth]{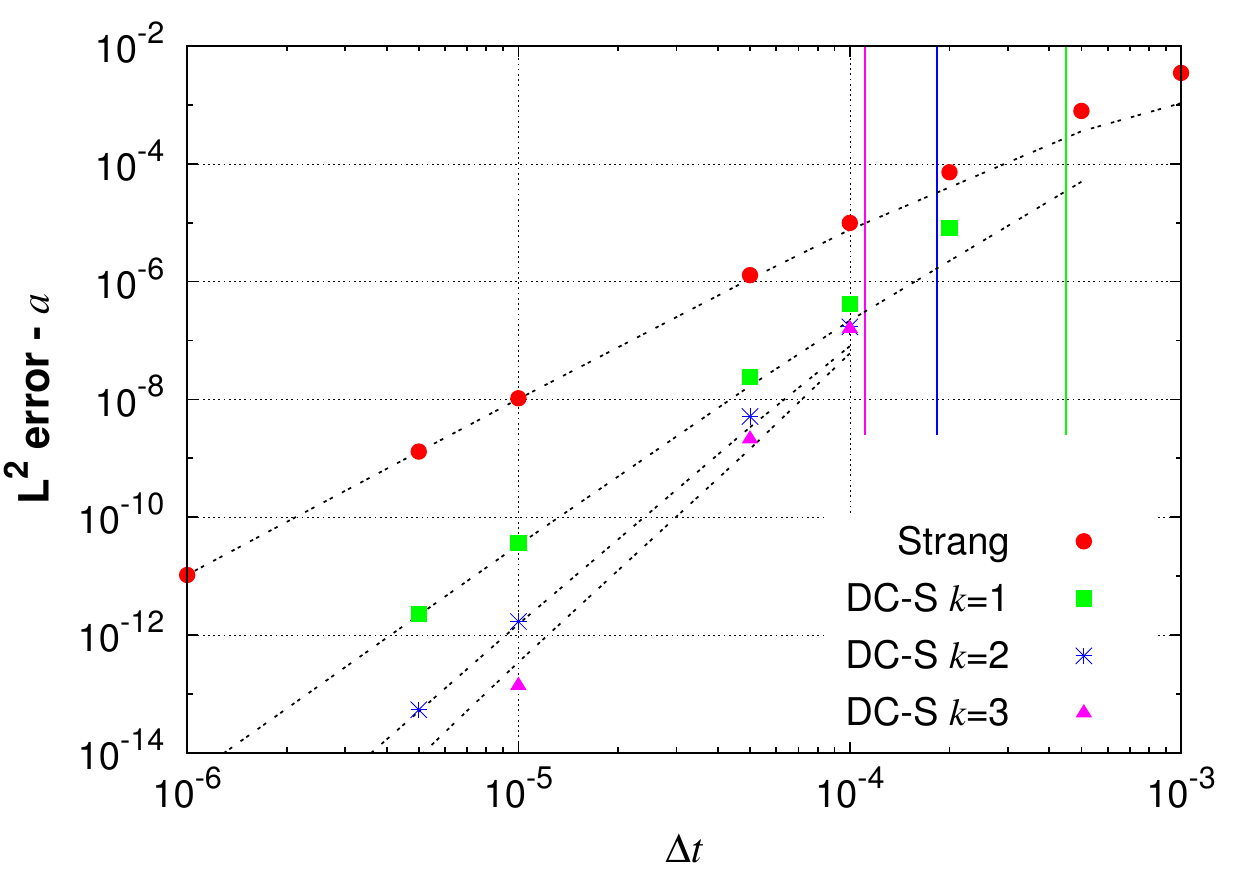}
\end{center}
\caption{Local error estimates $\widetilde{err}_k$ for different
time steps $\step$
for the DC--S scheme with Lie (left) and Strang (right) splitting.
Solid vertical lines stand for the maximum time steps $\step_{\max,k}$.
Dashed lines correspond to local $L^2$--errors from Figures \ref{fig:err_lie_nx1001} and
\ref{fig:err_str_nx1001}.}
\label{fig:err_screen_nx1001}
\end{figure}
\begin{figure}[!htb]
\begin{center}
\includegraphics[width=0.49\textwidth]{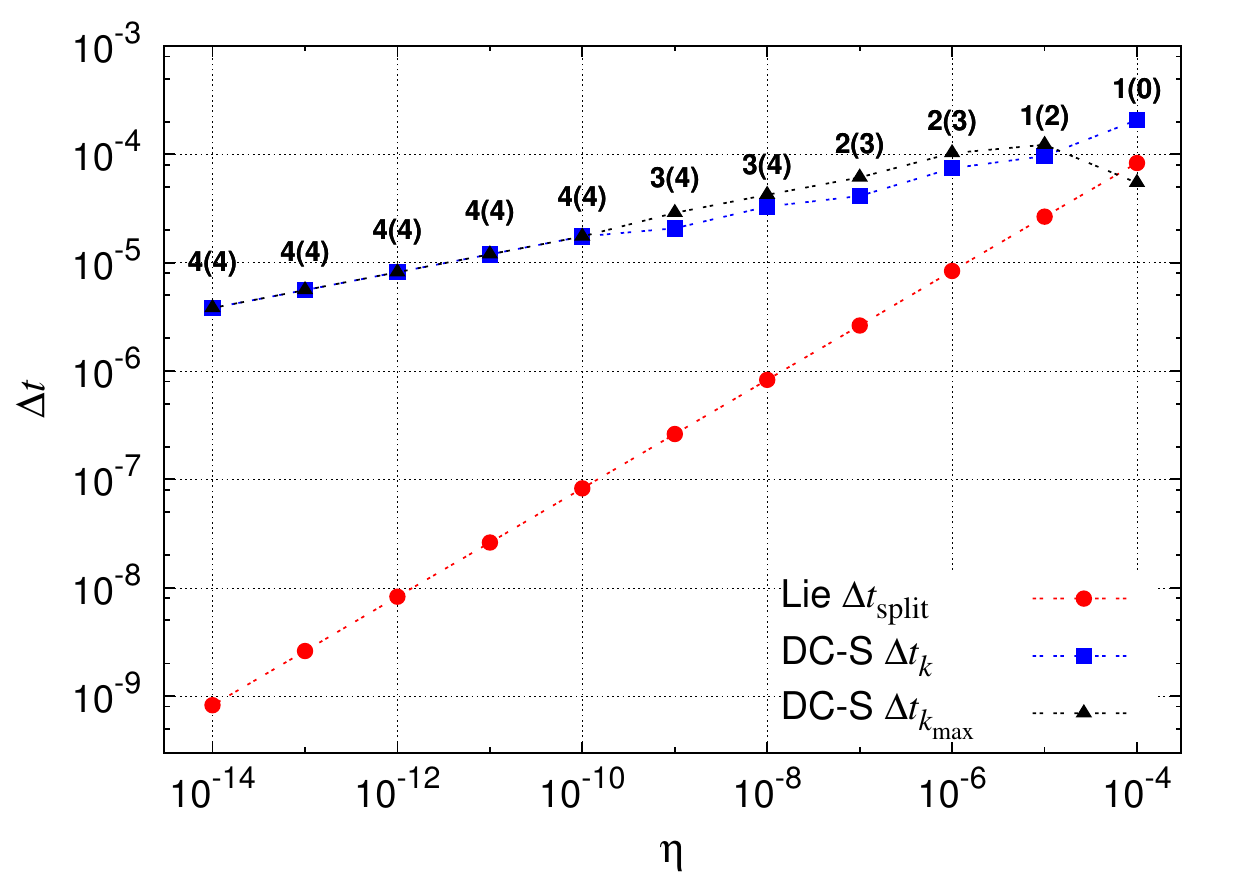}
\includegraphics[width=0.49\textwidth]{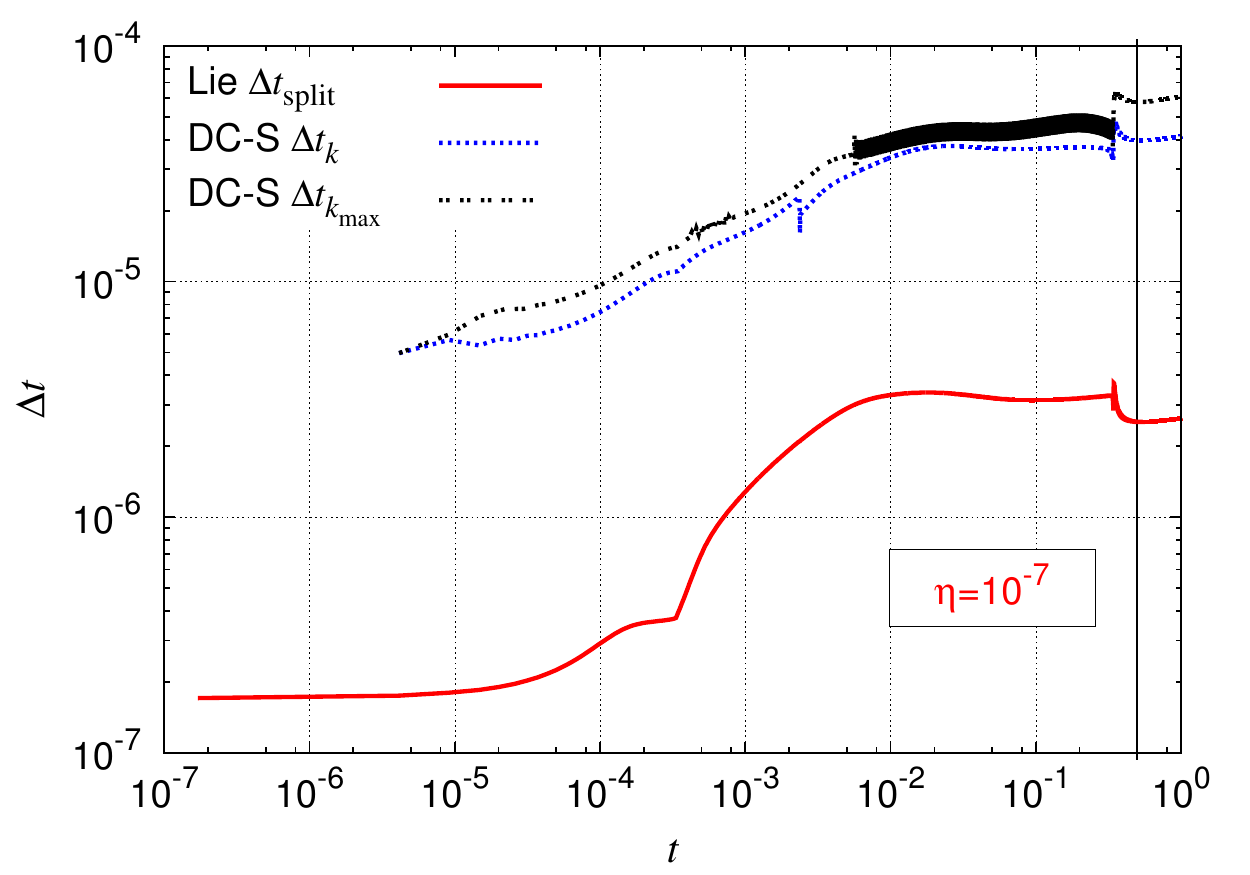}
\end{center}
\caption{Left:
time steps $\step$ at $t=0.5$
for different accuracy tolerances $\eta$
for the Lie and DC--S (Lie) scheme
using $\step_{\new,k}$ (\ref{eq:dtnew})
and $\step_{\new,k_{\max}}$ (\ref{eq:dtnew_kmax})
for time--stepping.
Number of iterations performed are indicated,
the case $\step_{\new,k_{\max}}$ in parenthesis.
Right: time--stepping for $\eta=10^{-7}$;
the solid vertical line corresponds to time $t=0.5$.}
\label{fig:dt_eta_lie_nx1001}
\end{figure}

The advantages of considering a high order approximation in time can
be inferred from Figure~\ref{fig:dt_eta_lie_nx1001}, especially
when fine accuracies are required.
For instance, to attain an accuracy of $\eta=10^{-12}$
the DC--S scheme based on Lie splitting needs a time step of $\step \approx 10^{-5}$,
whereas considering the Lie approximation would require a splitting
time step $1000$ times smaller.
In particular small variations of $\step$ in the DC--S scheme
can greatly improve the accuracy of the time integration at roughly
the same computational cost.
Moreover, in order to guarantee
a splitting solution of accuracy $\eta$, the numerical solution
of the split subproblems must also be performed with the same level
of accuracy \cite{Duarte11_SISC}.
For the splitting solver considered here, the latter involves
$\eta_{\rm Radau5} \leq \eta$
and $\eta_{\rm ROCK4} \leq \eta$,
whereas this is not required for the DC--S scheme.
As previously noted, using
$\step_{\new,k_{\max}}$ (\ref{eq:dtnew_kmax})
for the time--stepping involves slightly larger time steps
because more iterations are performed.
The dynamical time--stepping is illustrated in
Figure~\ref{fig:dt_eta_lie_nx1001} (right), using
$\step_{\new,k}$ (\ref{eq:dtnew})
and $\step_{\new,k_{\max}}$ (\ref{eq:dtnew_kmax})
for the DC--S scheme, and $\step_{\new,\splt}$
(\ref{eq:dtnew_split}) to adapt
the splitting time step for the Lie solver.
For this particular problem, a roughly constant time step is
attained, after some initial transients, consistent with the
quasi self--similar propagation of the wavefronts, as depicted in
Figure~\ref{fig:sol_BZ_nx1001}.
In all cases the time--stepping procedure guarantees numerical approximations
within a user--defined accuracy.

\subsection{Space discretization errors}
We now investigate the spatial discretization errors in the
previous approximations.
We consider both a second and a fourth order centered finite differences
scheme for the Laplacian operator, as well as various resolutions
given by 501, 1001, 2001, 4001, 8001, and 16001 grid points.
The
DC--S solution (with Lie splitting and $k=4$)
computed with the
finest space resolution of 16001 points
is taken as the reference solution.
Figure \ref{fig:space_err} (left) shows the numerical errors for
different discretizations with respect to the reference solution.
All approximations are initialized from the same solution represented
on the 16001--points reference grid;
then, an integration time step is computed on
all grids with the same
DC--S integration scheme
and a time step of $10^{-6}$.
The goal is to illustrate only the space discretization
errors,
as seen in Figure \ref{fig:space_err} (left).

Considering the same results obtained with the Lie and the corresponding
DC--S scheme on a 1001--points grid, previously shown in Figure \ref{fig:err_lie_nx1001},
we compute the numerical errors
with respect to the reference solution on 16001 points.
Figure \ref{fig:space_err} (right) illustrates the errors arising
from spatial and temporal numerical errors after one time step.
According to (\ref{eq:err_uk_disc}) we see that for sufficiently
large time steps the temporal integration error is mainly responsible
for the total approximation error, whereas the accuracy of the method
is limited by the space discretization errors for small time steps.
Considering a high order space discretization scheme increases
the region for which time integration errors control the accuracy
of the approximation.
For a relatively coarse grid of
1001 grid points, it can be seen that an order of
magnitude can be already gained in terms of accuracy by increasing the order
of the space discretization scheme, as seen in Figure \ref{fig:space_err}.
Greater improvements are expected for finer spatial resolutions
as inferred from Figure \ref{fig:space_err} (left).
\begin{figure}[!htb]
\begin{center}
\includegraphics[width=0.49\textwidth]{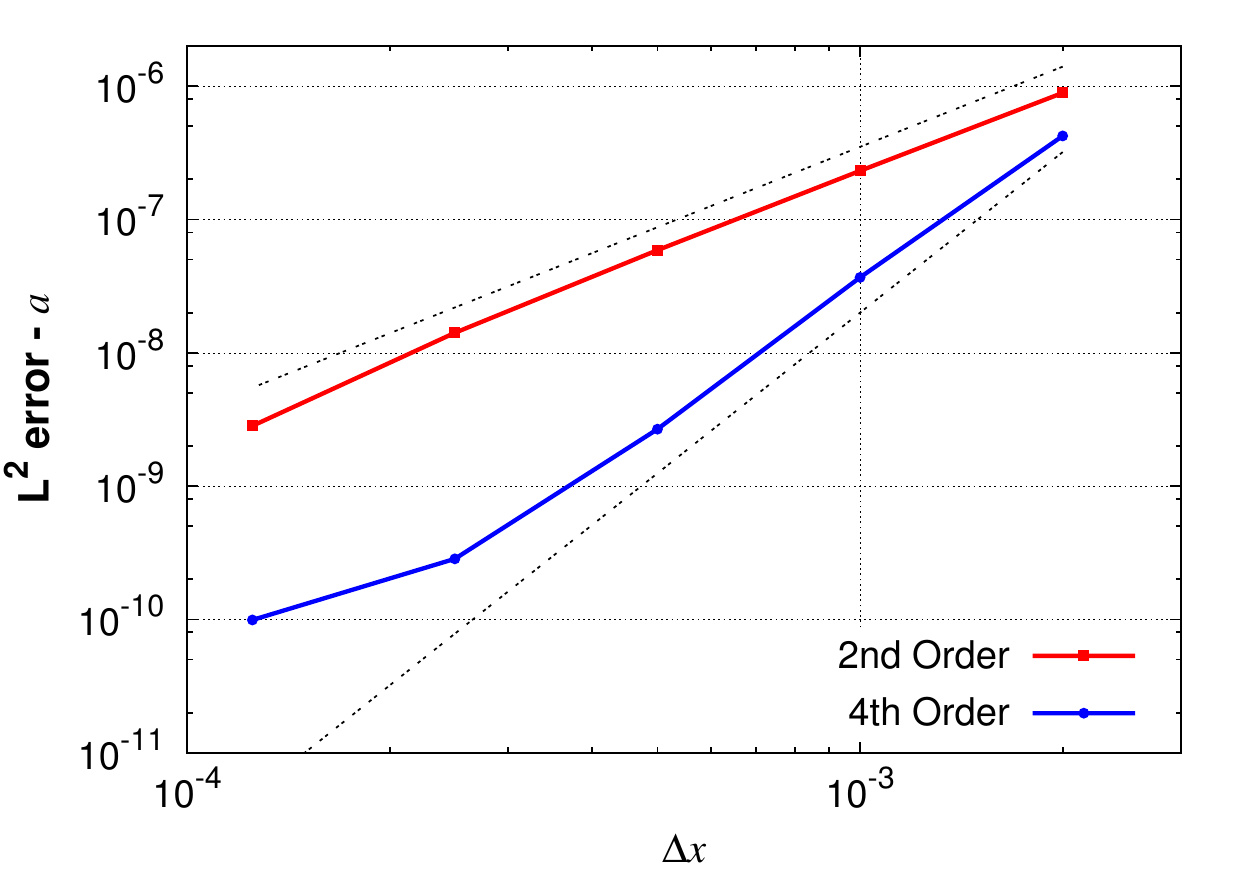}
\includegraphics[width=0.49\textwidth]{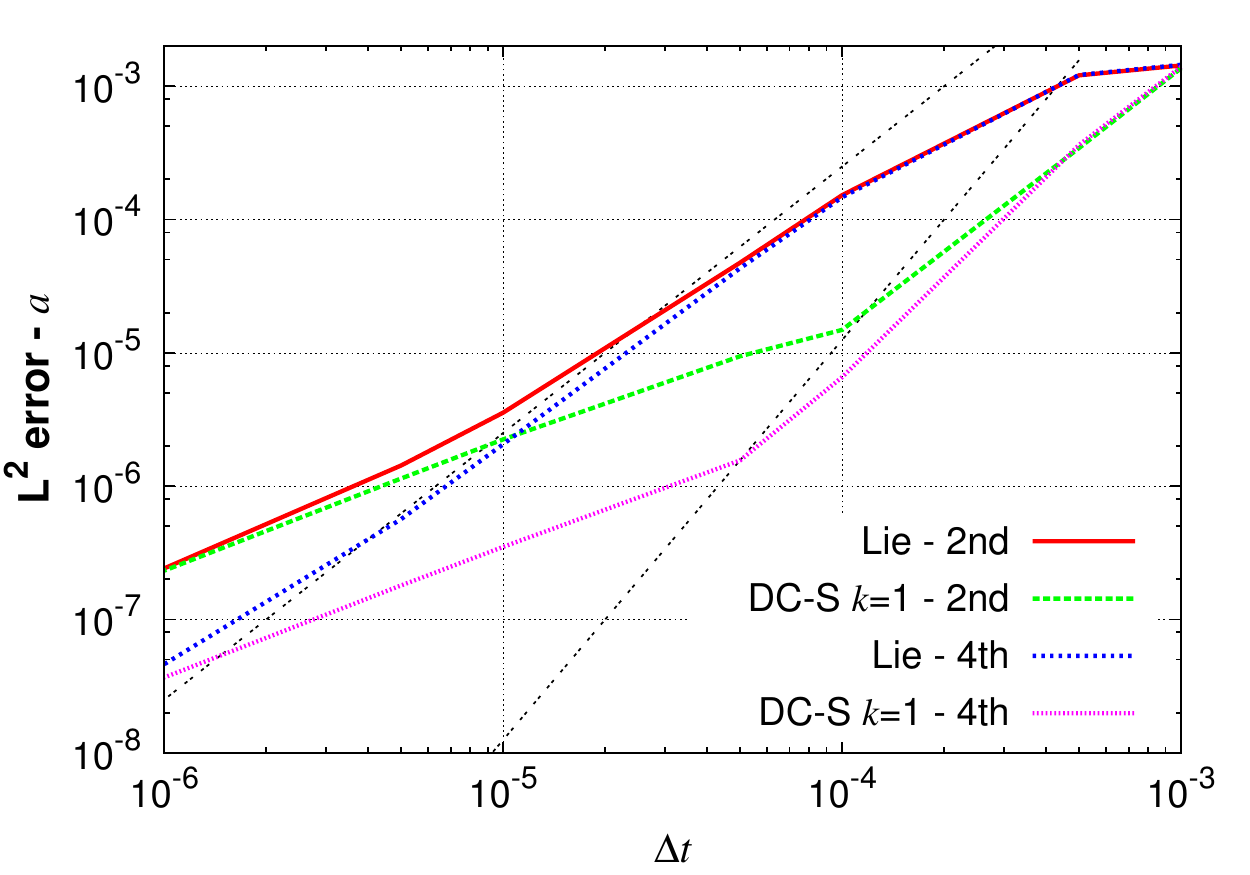}
\end{center}
\caption{Left: space discretization errors after one time step with the same time integration
scheme.
Right: local $L^2$--errors for the DC--S scheme with Lie splitting
on a 1001--points grid, accounting for both time and
space numerical errors.
Dashed lines of slopes 2 and 4 (left), and 2 and 3 (right)
are also depicted.
Space discretizations of second and fourth order are considered.
}
\label{fig:space_err}
\end{figure}
\begin{figure}[!htb]
\begin{center}
\includegraphics[width=0.49\textwidth]{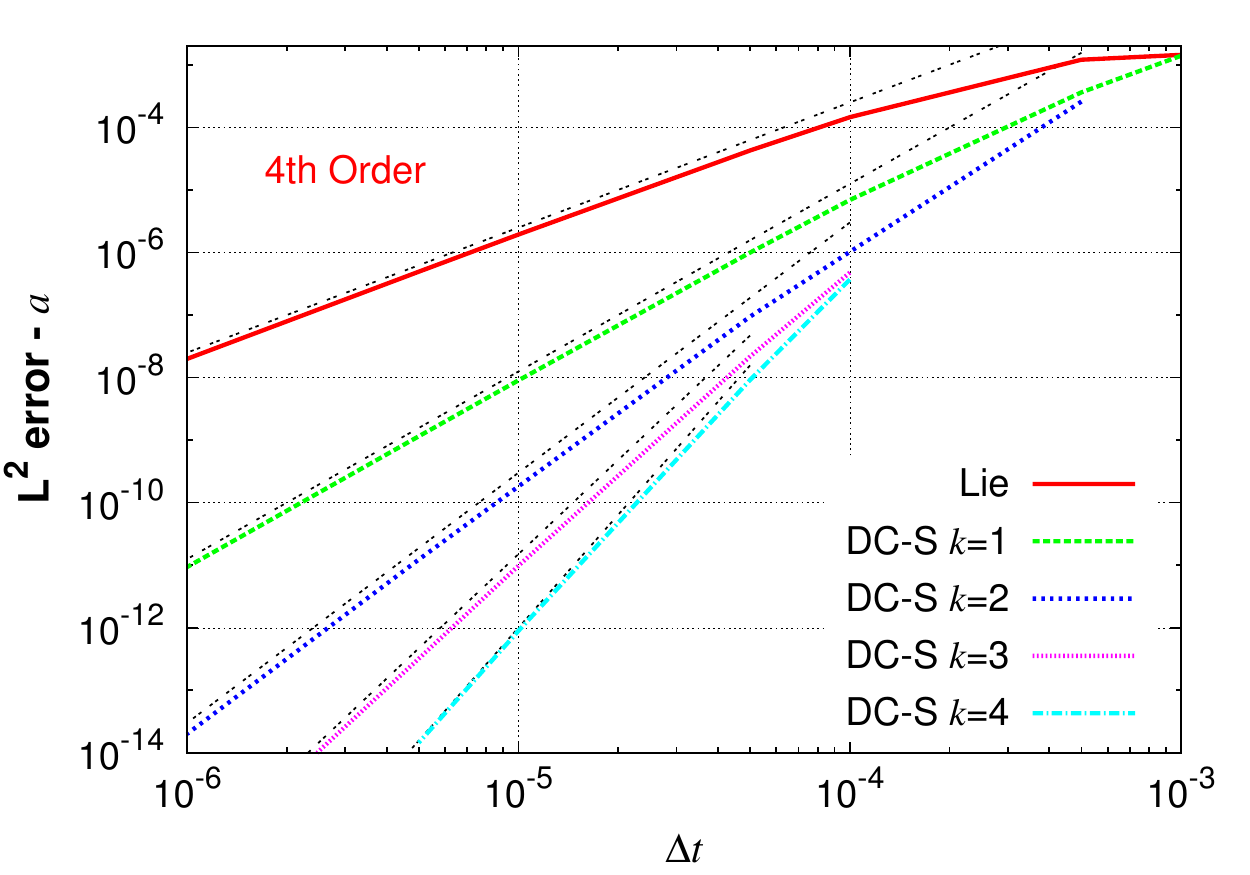}
\includegraphics[width=0.49\textwidth]{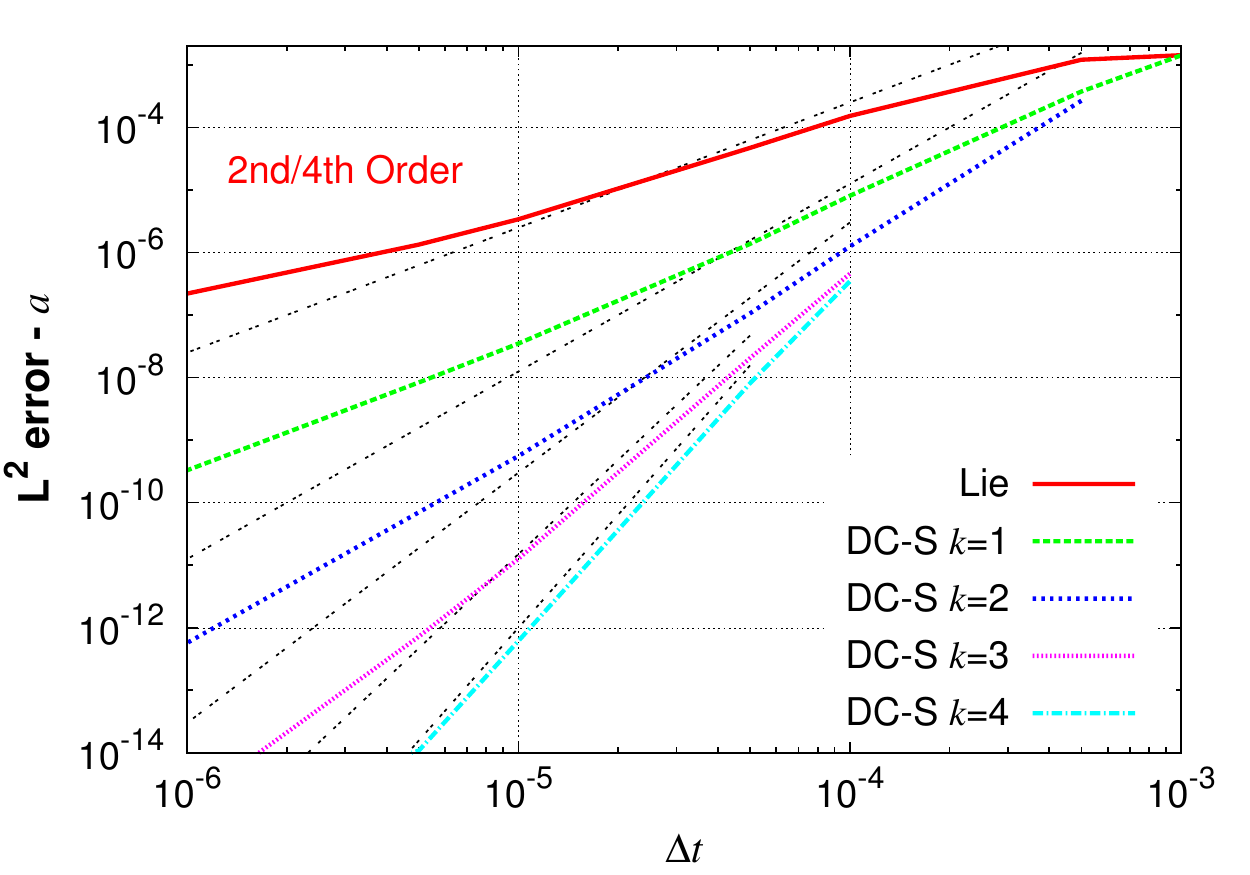}
\end{center}
\caption{Local $L^2$--errors
for the DC--S scheme with Lie splitting with fourth order
(left) and second/fourth order (right)
space discretizations.
Dashed lines of slopes 3 to 6 are also depicted.}
\label{fig:space_err_4th}
\end{figure}

Coming back to a spatial resolution of 1001 points, Figure \ref{fig:space_err_4th}
(left) displays numerical errors related to the time integration,
as in Figure \ref{fig:err_lie_nx1001} (left), this time with the fourth order
space discretization. As before the reference solution is obtained
with Radau5 and $\eta_{\rm Radau5} = 10^{-14}$,
considering now the fourth order
space discretization scheme.
The same previous conclusions apply
as for the results in Figure \ref{fig:err_lie_nx1001},
now with high order approximations in both time and space.
Similar behaviors are observed for the Strang case.
Nevertheless,
a high order scheme in space can be more
time and memory consuming, since it usually requires larger
stencils and thus more computational work.
One alternative,
previously discussed in \S\ref{subsubsec:high_order_space},
uses high order space discretizations
for the quadrature formulas, which in practice involve
matrix--vector multiplications; while the time integrators
that generate the iterative corrections are performed
with low order discretization schemes in both time and space.
In Figure \ref{fig:space_err_4th} (right) we evaluate
the temporal errors for this configuration, using
the same reference solution as in Figure \ref{fig:space_err_4th} (left).
It can be seen that the space discretization errors are
progressively eliminated until we obtain the same results
observed in Figure \ref{fig:space_err_4th} (left).
That is, the hybrid implementation with second/fourth order in space
converges to the high order quadrature formula evaluated
with a high order space discretization.
However,
the time stepping strategy with error control
established
in \S\ref{sec:error_control}
is no longer valid
since a decomposition of time and space discretization errors
such as (\ref{eq:err_uk_disc})
is no longer possible.

\section{Concluding remarks}\label{sec:conclusion}
We have introduced a new numerical scheme that achieves high order
approximations in time for the solution of time dependent stiff PDEs.
The method exploits the advantages of operator splitting
techniques to handle the stiffness associated with different
processes modeled by the PDEs, while high order in time
is attained through an iterative procedure based on
a standard deferred correction technique.
In this way the initially low order approximation computed by
a tailored splitting solver is iteratively corrected
to obtain a high order approximation based on a quadrature
formula.
The latter is based on Radau collocation nodes according
to the RadauIIA, $s$--stage quadrature formulas.
Moreover, the splitting solver uses dedicated methods
in terms of numerical stability and independent time--stepping features
within every splitting time step
to separately advance each subproblem
originating from the PDE.
Consequently, no time step restriction needs to be observed
in the deferred correction splitting technique to guarantee
the numerical stability of the numerical integration.
The traditionally low algorithmic complexity and efficiency
of a splitting approach are therefore preserved, while high order
approximations in time are achieved by including extra function evaluations at
the collocation nodes and by performing matrix--vector multiplications
according to the quadrature formulas.

A mathematical analysis of the method was also conducted
in a general finite dimensional space with standard assumptions on the
PDEs and the splitting approximations.
In this context
it was proved that the local error of the iterative method behaves
like $\step^{\min[p+1,q+2,\widehat{p}+k+1]}$,
$k=1,2,\ldots$,
where $p$, $q$, and $\widehat{p}$ stand for the global and stage
orders of the quadrature formulas and the global order of the splitting
approximation, respectively.
A maximum time step was also formally identified beyond which the
iterative scheme yields no additional correction.
Based on these theoretical results, a time--stepping strategy
was derived to monitor the approximation errors
related to the time integration.
Both the definition of the maximum time step
as well as the time--stepping procedure
remain valid regardless of the integration scheme
used to approximate the solution at the collocation nodes.
Numerical results confirmed the theoretical findings in terms
of time integration errors and the orders attained.
In particular, given a stiff PDE discretized on a given grid,
the time--stepping technique yields numerical time integrations
within a user--defined accuracy tolerance.
This error control remains effective for the overall accuracy
of the numerical approximations for a sufficiently fine space resolution
and/or when high order space discretization schemes are considered.

Finally, a hybrid approach wherein low order spatial discretizations
are used to advance the correction equation while fully coupled high
order spatial discretizations are used to evaluate the quadrature
formula was introduced.  The numerical tests performed herein
demonstrate that this approach converged, for the particular problem
considered, to the same solution obtained using the high order spatial
discretization throughout.  The advantage to the hybrid approach is
that the correction equation is solved using less computationally
expensive spatial operators, although the theoretical results
concerning adaptive time--stepping no longer apply.


\appendix

\section{Relation with the parareal algorithm}\label{sec:parareal}
Considering problem (\ref{eq:gen_prob_disc}) over the time domain
$[t_0,t_n]$, it can be decomposed
into $n$ subdomains $[t_{i-1},t_{i}[$,
$i=1,\ldots,n$, and
$\step_i:=t_{i}-t_{i-1}$.
The parareal algorithm \cite{Lions01}
is based on two propagation operators:
$\Gpara^{t} \vec u_0$
and $\Fpara^{t}\vec u_0$,
that provide, respectively, a coarse and a more accurate
(fine) approximation
to the solution of problem (\ref{eq:gen_prob_disc}).
The algorithm
starts with an initial approximation
$(\widetilde{\vec u}^0_i)_{i=1,\ldots,n}$,
given by the sequential computation:
\begin{equation}\label{con_ini}
{\widetilde{\vec u}}_i^0=\Gpara^{\step_{i}}{\widetilde{\vec u}}_{i-1}^0, \quad
i=1,\ldots,n.
\end{equation}
It then performs the correction iterations:
\begin{equation}\label{para}
\widetilde{\vec u}^{k+1}_i=
 \Fpara^{\step_{i}} {\widetilde{\vec u}}^{k}_{i-1}
+\Gpara^{\step_{i}} {\widetilde{\vec u}}^{k+1}_{i-1}
-\Gpara^{\step_{i}} {\widetilde{\vec u}}^{k}_{i-1},
\quad
i=1,\ldots,n.
\end{equation}
The coarse approximations, which are computed sequentially,
are thus iteratively corrected in order to get
a more accurate solution based on the fine solver,
performed in parallel.

The coarse propagator can be thus defined as a low order
operator splitting technique as considered in
\cite{Duarte11_parareal}.
Moreover, if the $n$ time intervals are defined according
to the collocation nodes of an IRK quadrature formula, then
the fine propagator could be defined following
(\ref{eq2:def_uhat}).
The iterative DC--S scheme
(\ref{eq:ini_split})--(\ref{eq2:def_utildeksplit})
can be thus formally seen as a particular implementation of the parareal
algorithm (\ref{con_ini})--(\ref{para}).
The parareal algorithm can be actually recast
as a deferred correction scheme \cite{Gander07}.
The combination of time parallelism and deferred correction techniques was
recently investigated in \cite{Minion2010,Emmett2012}.
However, in the DC--S scheme
the deferred corrections
are performed over a single time step instead of the
entire time domain of interest.
The latter involves a departure from the time parallel
capabilities of the parareal algorithm,
as it was originally conceived.
Furthermore, the quadrature formula in the DC--S scheme
that considers the approximations
$(\widetilde{\vec u}^k_i)_{i=1,\ldots,s}$ to compute
$\widehat{\vec u}^k_i$ at each node
cannot be
viewed as the standard fine propagator in the parareal framework
which would rather perform a numerical time integration from $\widetilde{\vec u}^k_{i-1}$.
The latter is especially relevant
when one studies the method
since a common approach in the analysis of the parareal algorithm considers
the fine propagator as the semiflow corresponding to the exact solution (see, e.g.,
\cite{Lions01,Bal03,Gander07,Hairer08}).

\section{Relation with SDC methods}\label{sec:SDC}
The iterative
SDC method introduced in \cite{DuttSDC2000}
to approximate the solution of problem (\ref{eq:gen_prob_disc})
can be written in general as \cite{Minion2003}
\begin{equation}\label{eq:uSDC_k}
\widetilde{\vec u}^{k+1}_i = \widetilde{\vec u}^{k+1}_{i-1} +
\int_{t_{i-1}}^{t_i} \left[ \vec F(\widetilde{\vec u}^{k+1}(\tau))
- \vec F(\widetilde{\vec u}^k(\tau))\right] \, \der \tau
+ S_{t_{i-1}}^{t_i} (\widetilde{\vec U}^k),
\quad
i=1,\ldots,n,
\end{equation}
where $\step=t_n-t_0$, and
\begin{equation}\label{eq:SDC_S}
S_{t_{i-1}}^{t_{i}} ({\vec U})
\approx
 \int_{t_{i-1}}^{t_{i}} \vec F({\vec u}(\tau))\, \der \tau,
\end{equation}
is a spectral integration operator
defined
by means of a quadrature formula that evaluates
$\vec F(\vec u(t))$ at
the $n$ or $n+1$ (if time $t_0$ is included)
collocation nodes.
Depending on the stiffness of problem (\ref{eq:gen_prob})
(or (\ref{eq:gen_prob_disc}))
either an explicit or implicit Euler approximation
to the remaining integrals in (\ref{eq:uSDC_k})
is considered, that is,
\begin{equation*}\label{eq:uSDC_k_exp}
\widetilde{\vec u}^{k+1}_{i} = \widetilde{\vec u}^{k+1}_{i-1} +
\step_i
\left[ \vec F(\widetilde{\vec u}^{k+1}_{i-1}(\tau))
- \vec F(\widetilde{\vec u}^k_{i-1}(\tau))\right]
+ S_{t_{i-1}}^{t_{i}} (\widetilde{\vec U}^k),
\end{equation*}
or
\begin{equation*}\label{eq:uSDC_k_imp}
\widetilde{\vec u}^{k+1}_{i} = \widetilde{\vec u}^{k+1}_{i-1} +
\step_i
\left[ \vec F(\widetilde{\vec u}^{k+1}_{i}(\tau))
- \vec F(\widetilde{\vec u}^k_{i}(\tau))\right]
+ S_{t_{i-1}}^{t_{i}} (\widetilde{\vec U}^k),
\end{equation*}
or a combination of both depending on the nature
of each $\vec F_i(u)$ into $\vec F(u) = \vec F_1(u) + \vec F_2(u) + \ldots$
\cite{Minion2003}.
The $0$--th iteration is computed by a low order time integration scheme.
These approximations are thus iteratively
corrected to obtain a high order quadrature rule
solution:
$\widetilde{\vec u}^{k+1}_{n} \approx
\widetilde{\vec u}^{k+1}_{n-1} + S_{t_{n-1}}^{t_{n}} (\widetilde{\vec U}^k)$
following (\ref{eq:uSDC_k}).

Introducing the following approximation into (\ref{eq:uSDC_k}),
\begin{equation*}
\widetilde{\vec u}^k_{i-1} +
\int_{t_{i-1}}^{t_{i}} \vec F(\widetilde{\vec u}^{k}(\tau))
\, \der \tau \approx
\OS^{(c_i-c_{i-1})\step} \widetilde{\vec u}^{k}_{i-1},
\end{equation*}
with $\step_1=c_1\step$ and $\step_i=(c_i-c_{i-1})\step$,
$i=2,\ldots,n=s$,
one can notice that
(\ref{eq2:def_utildeksplit}) and (\ref{eq:uSDC_k})
become equivalent,
as long as the spectral operator $S_{t_{i-1}}^{t_{i}} (\vec U)$
with its corresponding quadrature nodes are defined
based on an IRK scheme relying on a collocation method,
that is,
$S_{t_{i-1}}^{t_{i}} (\vec U)
\equiv
I_{t_{i-1}}^{t_{i}} (\vec U)$.
The spectral operator (\ref{eq:SDC_S})
was built
in \cite{DuttSDC2000}
based on the Lagrange interpolant
that evaluates $\vec F(\vec u(t))$
at the Gauss--Legendre nodes,
which is equivalent to considering a Gauss quadrature
formula for IRK schemes.
The latter was later enhanced in \cite{Minion2003}
by employing Lobatto quadrature formulas.
In this case
the spectral operator is equivalent to that based
on the LobattoIIIA--IRK scheme.
These analogies between spectral operators in SDC
schemes and quadrature formulas for IRK
methods were investigated, for instance, in \cite{Hagstrom2006}.
The DC--S scheme (\ref{eq2:def_utildeksplit}) can be thus
recast as a variation of the SDC method in which numerical
time integrations are performed within each time
subinterval instead of
approximating integrals into (\ref{eq:uSDC_k}).
The latter, however, involves important changes to the iterative procedure
in (\ref{eq:uSDC_k}) together with its practical implementation.

%
\bibliographystyle{plain}
\bibliography{biblio_DCS}

\begin{thebibliography}{10}

\bibitem{MR1923724}
A.~Abdulle.
\newblock Fourth order {C}hebyshev methods with recurrence relation.
\newblock {\em SIAM J. Sci. Comput.}, 23(6):2041--2054 (electronic), 2002.

\bibitem{Axelsson72}
O.~Axelsson.
\newblock A note on a class of strongly {A}--stable methods.
\newblock {\em BIT Numer. Math.}, 12:1--4, 1972.

\bibitem{Bal03}
G.~Bal.
\newblock On the convergence and the stability of the parareal algorithm to
  solve partial differential equations.
\newblock In {\em Proceedings of the 15th International Domain Decomposition
  Conference, Lect. Notes Comput. Sci. Eng. 40}, pages 426--432. Springer,
  Berlin, 2003.

\bibitem{Bijl2005}
H.~Bijl and M.H. Carpenter.
\newblock Iterative solution techniques for unsteady flow computations using
  higher order time integration schemes.
\newblock {\em Int. J. Numer. Meth. Fluids}, 47:857--862, 2005.

\bibitem{Bijl2002}
H.~Bijl, M.H. Carpenter, V.N. Vatsa, and C.A. Kennedy.
\newblock Implicit time integration schemes for the unsteady compressible
  {N}avier--{S}tokes equations: {L}aminar flow.
\newblock {\em J. Comput. Phys.}, 179(1):313--329, 2002.

\bibitem{Blanes13}
S.~Blanes, F.~Casas, P.~Chartier, and A.~Murua.
\newblock Optimized high--order splitting methods for some classes of parabolic
  equations.
\newblock {\em Math. Comp.}, 82:1559--1576, 2013.

\bibitem{MR0159424}
J.C. Butcher.
\newblock Implicit {R}unge--{K}utta processes.
\newblock {\em Math. Comp.}, 18:50--64, 1964.

\bibitem{Butcher64}
J.C. Butcher.
\newblock Integration processes based on {R}adau quadrature formulas.
\newblock {\em Math. Comp.}, 18:233--244, 1964.

\bibitem{Carpenter05}
M.H. Carpenter, C.A. Kennedy, H.~Bijl, S.A. Viken, and V.N. Vatsa.
\newblock Fourth--order {R}unge--{K}utta schemes for fluid mechanics
  applications.
\newblock {\em J. Sci. Comput.}, 25:157--194, 2005.

\bibitem{Castella09}
F.~Castella, P.~Chartier, S.~Descombes, and G.~Vilmart.
\newblock Splitting methods with complex times for parabolic equations.
\newblock {\em BIT Numer. Math.}, 49:487--508, 2009.

\bibitem{Chipman71}
F.H. Chipman.
\newblock A--stable {R}unge--{K}utta processes.
\newblock {\em BIT Numer. Math.}, 11:384--388, 1971.

\bibitem{Descombe01}
S.~Descombes.
\newblock Convergence of a splitting method of high order for
  reaction--diffusion systems.
\newblock {\em Math. Comp.}, 70(236):1481--1501, 2001.

\bibitem{Descombes_LM}
S.~Descombes, M.~Duarte, T.~Dumont, F.~Laurent, V.~Louvet, and M.~Massot.
\newblock Analysis of operator splitting in the nonasymptotic regime for
  nonlinear reaction--diffusion equations. {A}pplication to the dynamics of
  premixed flames.
\newblock {\em SIAM J. Numer. Anal.}, 52(3):1311--1334, 2014.

\bibitem{MR2847238}
S.~Descombes, M.~Duarte, T.~Dumont, V.~Louvet, and M.~Massot.
\newblock Adaptive time splitting method for multi-scale evolutionary partial
  differential equations.
\newblock {\em Confluentes Math.}, 3(3):413--443, 2011.

\bibitem{Descombes04}
S.~Descombes and M.~Massot.
\newblock Operator splitting for nonlinear reaction--diffusion systems with an
  entropic structure: {S}ingular perturbation and order reduction.
\newblock {\em Numer. Math.}, 97(4):667--698, 2004.

\bibitem{Duarte11_parareal}
M.~Duarte, S.~Descombes, and M.~Massot.
\newblock Parareal operator splitting techniques for multi--scale reaction
  waves: {N}umerical analysis and strategies.
\newblock {\em ESAIM: Math. Model. Numer. Anal.}, 45:825--852, 2011.

\bibitem{Duarte11_ESAIM}
M.~Duarte, M.~Massot, S.~Descombes, C.~Tenaud, T.~Dumont, V.~Louvet, and
  F.~Laurent.
\newblock New resolution strategy for multi-scale reaction waves using time
  operator splitting and space adaptive multiresolution: {A}pplication to human
  ischemic stroke.
\newblock {\em ESAIM: Proc.}, 34:277--290, 2011.

\bibitem{Duarte11_SISC}
M.~Duarte, M.~Massot, S.~Descombes, C.~Tenaud, T.~Dumont, V.~Louvet, and
  F.~Laurent.
\newblock New resolution strategy for multiscale reaction waves using time
  operator splitting, space adaptive multiresolution and dedicated high order
  implicit/explicit time integrators.
\newblock {\em SIAM J. Sci. Comput.}, 34(1):A76--A104, 2012.

\bibitem{Dumont2013}
T.~Dumont, M.~Duarte, S.~Descombes, M.-A. Dronne, M.~Massot, and V.~Louvet.
\newblock Simulation of human ischemic stroke in realistic {3D} geometry.
\newblock {\em Commun. Nonlinear Sci. Numer. Simul.}, 18(6):1539--1557, 2013.

\bibitem{DuttSDC2000}
A.~Dutt, L.~Greengard, and V.~Rokhlin.
\newblock Spectral deferred correction methods for ordinary differential
  equations.
\newblock {\em BIT Numer. Math.}, 40(2):241--266, 2000.

\bibitem{Ehle68}
B.L. Ehle.
\newblock High order {A}--stable methods for the numerical solution of systems
  of {DE}s.
\newblock {\em BIT Numer. Math.}, 8:276--278, 1968.

\bibitem{Ehle69}
B.L. Ehle.
\newblock On {P}ad\'{e} approximations to the exponential function and
  {A}--stable methods for the numerical solution of initial value problems.
\newblock {\em Research Report CSRR 2010}, 1969.

\bibitem{Emmett2012}
M.~Emmett and M.L. Minion.
\newblock Toward an efficient parallel in time method for partial differential
  equations.
\newblock {\em Comm. App. Math. and Comp. Sci}, 7(1):105--132, 2012.

\bibitem{Epstein98}
I.R. Epstein and J.A. Pojman.
\newblock {\em An Introduction to Nonlinear Chemical Dynamics}.
\newblock Oxford University Press, 1998.
\newblock {O}scillations, Waves, Patterns and Chaos.

\bibitem{Field72}
R.J. Field, E.~Koros, and R.M. Noyes.
\newblock Oscillations in chemical systems. {I}{I}. {T}horough analysis of
  temporal oscillation in the bromate--cerium--malonic acid system.
\newblock {\em J. Amer. Chem. Soc.}, 94(25):8649--8664, 1972.

\bibitem{Hairer08}
M.~Gander and E.~Hairer.
\newblock Nonlinear convergence analysis for the parareal algorithm.
\newblock In {\em Domain Decomposition Methods in Science and Engineering
  XVII}, pages 45--56. Springer, Berlin, 2008.

\bibitem{Gander07}
M.~Gander and S.~Vandewalle.
\newblock Analysis of the parareal time--parallel time--integration method.
\newblock {\em SIAM J. Sci. Comput.}, 29(2):556--578, 2007.

\bibitem{Scott94}
P.~Gray and S.K. Scott.
\newblock {\em Chemical Oscillations and Instabilites}.
\newblock Oxford Univ. Press, 1994.

\bibitem{Guillon69}
A.~Guillon and F.L. Soul{\'e}.
\newblock La r\'{e}solution num\'{e}rique des probl\`{e}mes diff\'{e}rentiels
  aux conditions initiales par des m\'{e}thodes de collocation.
\newblock {\em RAIRO Anal. Num\'{e}r. Ser. Rouge, v. R--3}, pages 17--44, 1969.

\bibitem{Hagstrom2006}
T.~Hagstrom and R.~Zhou.
\newblock On the spectral deferred correction of splitting methods for initial
  value problems.
\newblock {\em Comm. App. Math. and Comp. Sci.}, 1(1):169--205, 2006.

\bibitem{MR2221614}
E.~Hairer, C.~Lubich, and G.~Wanner.
\newblock {\em Geometric Numerical Integration. Structure--Preserving
  Algorithms for Ordinary Differential Equations}.
\newblock Springer--Verlag, Berlin, 2nd edition, 2006.

\bibitem{Hairer96}
E.~Hairer and G.~Wanner.
\newblock {\em Solving Ordinary Differential Equations {I}{I}. Stiff and
  Differential--Algebraic Problems}.
\newblock Springer--Verlag, Berlin, 2nd edition, 1996.

\bibitem{Hansen09}
E.~Hansen and A.~Ostermann.
\newblock High order splitting methods for analytic semigroups exist.
\newblock {\em BIT Numer. Math.}, 49:527--542, 2009.

\bibitem{MR2002152}
W.~Hundsdorfer and J.G. Verwer.
\newblock {\em Numerical Solution of Time--Dependent
  Advection--Diffusion--Reaction Equations}.
\newblock Springer--Verlag, Berlin, 2003.

\bibitem{Lions01}
J.L. Lions, Y.~Maday, and G.~Turinici.
\newblock R\'esolution d'{EDP} par un sch\'ema en temps ``parar\'eel''.
\newblock {\em C. R. Acad. Sci. Paris S\'er. I Math.}, 332(7):661--668, 2001.

\bibitem{Marchuk68}
G.I. Marchuk.
\newblock Some application of splitting--up methods to the solution of
  mathematical physics problems.
\newblock {\em Appl. Math.}, 13(2):103--132, 1968.

\bibitem{Minion2003}
M.L. Minion.
\newblock Semi--implicit spectral deferred correction methods for ordinary
  differential equations.
\newblock {\em Comm. Math. Sci.}, 1:471--500, 2003.

\bibitem{Minion2010}
M.L. Minion.
\newblock A hybrid parareal spectral deferred corrections method.
\newblock {\em Comm. App. Math. and Comp. Sci}, 5(2):265--301, 2010.

\bibitem{Prothero74}
A.~Prothero and A.~Robinson.
\newblock On the stability and accuracy of one--step methods for solving stiff
  systems of ordinary differential equations.
\newblock {\em Math. Comp.}, 28(125):145--162, 1974.

\bibitem{RadauC}
R.~Radau.
\newblock \'{E}tude sur les formules d'approximation qui servent \`a calculer
  la valeur num\'erique d'une int\'egrale definie.
\newblock {\em J. Math. Pures Appl.}, 6:283--336, 1880.

\bibitem{Skeel82}
R.D. Skeel.
\newblock A theoretical framework for proving accuracy results for deferred
  corrections.
\newblock {\em SIAM J. Numer. Anal.}, 19(1):171--196, 1982.

\bibitem{Strang68}
G.~Strang.
\newblock On the construction and comparison of difference schemes.
\newblock {\em SIAM J. Numer. Anal.}, 5:506--517, 1968.

\bibitem{Trotter59}
H.F. Trotter.
\newblock On the product of semi--groups of operators.
\newblock {\em Proc. Am. Math. Soc.}, 10:545--551, 1959.

\bibitem{Wright71}
K.~Wright.
\newblock Some relationships between implicit {R}unge--{K}utta, collocation and
  {L}anczos $\tau$ methods, and their stability properties.
\newblock {\em BIT Numer. Math.}, 10:217--227, 1971.

\end{thebibliography}

\end{document}